\documentclass[a4paper,oneside,11pt]{amsart}

\reversemarginpar

\usepackage[colorlinks,hyperindex,linkcolor=blue,urlcolor=black,pdftitle={Non-stable subnormal contractions have nontrivial hyperinvariant subspaces},pdfauthor={Maria F. Gamal'}]
{hyperref}

\usepackage{amsmath}
\usepackage{amsfonts}
\usepackage{amssymb}
\usepackage{amsthm}

\usepackage[english]{babel}
\usepackage{enumerate}

\usepackage{graphicx}
\usepackage{color}

\usepackage[abbrev]{amsrefs}

\numberwithin{equation}{section}

\theoremstyle{plain}
\newtheorem{theorem}{Theorem}[section]
\newtheorem{lemma}[theorem]{Lemma}
\newtheorem{corollary}[theorem]{Corollary}
\newtheorem{proposition}[theorem]{Proposition}

\theoremstyle{definition}
\newtheorem{remark}[theorem]{Remark}

\begin{document}

\title[Existence of hyperinvariant subspaces]{Non-stable subnormal contractions have nontrivial hyperinvariant subspaces}

\author{Maria F. Gamal'}
\address{
MARIA F. GAMAL', 
St. Petersburg Branch\\ V. A. Steklov Institute 
of Mathematics\\
 Russian Academy of Sciences\\ Fontanka 27, St. Petersburg\\ 
191023, Russia  
}
\email{gamal@pdmi.ras.ru}

\begin{abstract}
A contraction $T$ on a (complex, separable) Hilbert space  is stable, or of class $C_{0\cdot}$, 
if $T^n\to 0$ in the strong operator topology. It is proved that for  a \emph{non-stable} pure subnormal contraction $T$ 
there exists a singular inner function $\theta$ such that the range of $\theta(T)$ is not dense. Consequently, $T$ has 
 nontrivial hyperinvariant subspaces. The proof is based on results by Esterle and K\'erchy. 
Examples of stable subnormal contractions are given for which the range of $\varphi(T)$ is dense
 for every $\varphi\in H^\infty$
($\varphi\not\equiv 0$). 
\smallskip

\noindent \textsc{Keywords:} \emph{Hyperinvariant subspace,  subnormal operator, contraction.}

\smallskip

\noindent MSC (2020): 47A15, 47A45,  47A60, 47B20.

 \end{abstract}

\maketitle

\section{Introduction}

Let $\mathcal H$ be a (complex, separable) Hilbert space, and let  $\mathcal L(\mathcal H)$ be the algebra of all (bounded linear) operators acting on  $\mathcal H$.
 The algebra of all $R\in\mathcal L(\mathcal H)$  such that $TR=RT$ is called the \emph{commutant} of $T$ and is denoted by  $\{T\}'$.  A (closed) subspace  $\mathcal M$ of  $\mathcal H$ is called \emph{invariant} 
for an operator  $T\in\mathcal L(\mathcal H)$, if $T\mathcal M\subset\mathcal M$, and \emph{hyperinvariant} 
for $T$ if $R\mathcal M\subset\mathcal M$ for all $R\in\{T\}'$. The complete lattice of all invariant (resp.,  hyperinvariant) subspaces for  $T$ is denoted by  $\operatorname{Lat}T$ (resp., by 
$\operatorname{Hlat}T$). 
 The \emph{hyperinvariant subspace problem}  is the question whether for every nontrivial operator 
$T\in\mathcal L(\mathcal H)$ there exists a nontrivial hyperinvariant subspace. 
Here ``nontrivial operator'' means  not a scalar multiple of the identity operator $I_{\mathcal H}$, 
and ``nontrivial subspace'' means  different from  $\{0\}$ and  $\mathcal H$.

A (closed) subspace $\mathcal M$ is called \emph{reducing} for $T$ if 
 $\mathcal M\in\operatorname{Lat}T\cap\operatorname{Lat}T^*$. 
It is known 
 that if an operator $T$ has a (non-zero) reducing subspace  $\mathcal M$ such that 
 $T|_{\mathcal M}$ has a nontrivial hyperinvariant subspace or $T|_{\mathcal M}=\lambda I_{\mathcal M}$ for some 
$\lambda\in \mathbb C$, then $T$ has  a nontrivial hyperinvariant subspace (if $T\neq\lambda I_{\mathcal H}$), see \cite{douglaspearcy}. 
 In particular, this is fulfilled if 
  $T|_{\mathcal M}$ is a normal operator,  because a normal operator has nontrivial hyperinvariant subspaces 
or is a scalar multiple of the identity operator.

Recall that an operator $T\in\mathcal L(\mathcal H)$ is called \emph{subnormal}
 if there exists a complex Hilbert space $\mathcal K$ and a normal operator $N\in\mathcal L(\mathcal K)$ 
 such that $\mathcal H\subset\mathcal K$, $\mathcal H\in\operatorname{Lat}N$ and $T=N|_{\mathcal H}$.
$N$ is a minimal normal extension of $T$, if $\vee_{n=0}^\infty N^{*n}\mathcal H=\mathcal K$. 
 Every subnormal
operator $T$ has  a unique (up to unitary equivalence) minimal normal
extension, see Corollary II.2.7 in \cite{conwaysubnormal}.  
If $T$ is subnormal,  $\mathcal M\in\operatorname{Lat}T$ and $T|_{\mathcal M}$ is normal, then $\mathcal M$ is a reducing subspace
for $T$. A subnormal operator $T$ is called \emph{pure}
 if it has no (non-zero) reducing subspaces $\mathcal M$ such that $T|_{\mathcal M}$ is normal. 
Every subnormal operator $T$ has the form $T=N\oplus T_0$, where $N$ is normal and $T_0$ is  pure subnormal 
(one of summands can act on the zero space; see Sec. II in  \cite{conwaysubnormal}).

It is known that subnormal operator is reflexive (see Theorem VII.8.5 in \cite{conwaysubnormal} or \cite{olinthomson}), 
 and rationally cyclic subnormal operator  has nontrivial hyperinvariant subspaces
(see Corollary V.4.7 in \cite{conwaysubnormal} or \cite{thomson}). For some other results, see \cite{foiasjungkopearcy}. 
But (up the author's knowledge) 
the existence of nontrivial hyperinvariant subspaces for arbitrary subnormal operator is unknown.

An operator $T\in\mathcal L(\mathcal H)$  is called 
a  \emph{contraction} if $\|T\|\leq 1$.  It is easy to see that for a contraction $T$ the \emph{stable subspace}  
\begin{equation}\label{hhtt0}\mathcal H_{T,0}=\{x\in\mathcal H :\ \|T^nx\|\to 0\}
\end{equation}
 is hyperinvariant for $T$ (Theorem II.5.4 in \cite{sznfbk}). 
The classes $C_{ab}$ of contractions, where 
 $a$ and $b$ can be $0$, $1$, or a dot,  
are defined as follows. If $\mathcal H_{T,0}=\mathcal H$, then  $T$ is \emph{stable}, in other words, $T$ is  \emph{of class} $C_{0\cdot}$, while if  $\mathcal H_{T,0}=\{0\}$, then $T$ is 
 \emph{of class} $C_{1\cdot}$. Furthermore,  $T$  is \emph{of class} $C_{\cdot a}$, if $T^*$ is of class  $C_{a\cdot}$,  
 and $T$ is  \emph{of class} $C_{ab}$, if $T$ is of class $C_{a\cdot}$ and of class $C_{\cdot b}$, $a$, $b=0,1$.

Denote by $\mathbb D$ and  $\mathbb T$ the open unit disc and the unit circle, respectively, 
and let $m$ be the normalized Lebesgue measure on $\mathbb T$. 
Let $T\in\mathcal L(\mathcal H)$ be a subnormal contraction and let $N_T$ be a minimal normal extension of $T$. By Theorem II.2.11 in
 \cite{conwaysubnormal}, $N$ is a contraction, too. Consequently, $N_T=U\oplus N$, where $U$ is a unitary operator and $N$ is a normal operator whose spectral measure is carried on $\mathbb D$; one of summands can act on the zero space. 
Since $N$ is  stable (moreover, $N$ is of class $C_{00}$),  the equality $N_T=N$ implies that $T$ 
 is of class $C_{00}$. Thus, if $T$ is a non-stable subnormal contraction, then  $U$ acts on a non-zero space. 

Every unitary operator $U$ has the form $U=U_{(s)}\oplus U_{(a)}$, where $U_{(a)}$ is \emph{absolutely continuous}, that is,  the spectral measure of $U_{(a)}$ is absolutely continuous with respect to $m$, and $ U_{(s)}$
is \emph{singular}, that is,  the spectral measure of $U_{(s)}$ is singular   with respect to $m$. (One of summands can act on the zero space.) Thus, 
\begin{equation*}  N_T=U_{(s)}\oplus U_{(a)}\oplus N \ \text{ and }\ T=N_T|_{\mathcal H}, 
\ \text{ where }\ \mathcal H\in\operatorname{Lat}N_T. 
\end{equation*}

It is well-known (see, for example, Sec. 3 and 4 in \cite{k16})
that 
$ \mathcal H$ has the form $ \mathcal  H=\mathcal H_{(s)}\oplus\mathcal H_{(a)}$, 
where $\mathcal H_{(s)}\in\operatorname{Lat} U_{(s)}$ and $\mathcal H_{(a)}\in\operatorname{Lat}(U_{(a)}\oplus N)$. 
Since all invariant subspaces of $ U_{(s)}$ are reducing,  $ \mathcal H_{(s)}$ is a reducing subspace for $U_{(s)}$. 
 Since $N_T$ is the minimal normal extension of $T$,   $ \mathcal H_{(s)}$ is the whole space on which  $U_{(s)}$ acts. Furthermore,  $ \mathcal H_{(s)}$ is a reducing subspace 
for $T$.  If  $ \mathcal H_{(s)}\neq\{0\}$, then $T$ has a nontrivial hyperinvariant subspace. 
Thus, assuming that $T$ is a \emph{non-stable} subnormal contraction,  it is sufficient to consider such $T$ whose minimal normal extension has the form $U\oplus N$, where $U$ is an absolutely continuous unitary operator and $N$ is  a normal operator whose spectral measure is carried on $\mathbb D$. The same can be deduced from Theorems I.3.2 and II.6.4, Proposition IX.1.7 in \cite{sznfbk} and Lemma \ref{asymptote} below. 

Let $H^\infty$ be the algebra of all analytic bounded functions in $\mathbb D$, and let $U$ and $N$ be as just above. 
 Let $\varphi\in H^\infty$. Then $\varphi(U\oplus N)$ is well defined, 
 $\mathcal H\in \operatorname{Lat}\varphi( U\oplus N)$, and 
$\varphi(T)=\varphi(U\oplus N)|_{\mathcal H}$. 

In the present paper, 
 the existence of a nontrivial hyperinvariant subspace is proved for \emph{non-stable} subnormal contractions. 
Namely, it is proved that if $T$ is a pure subnormal contraction and $T$ is \emph{not} of class $C_{0\cdot}$, 
then there exists a singular inner function $\theta$ such that the range of $\theta(T)$ is not dense (Theorem \ref{thm12}).   In  Sec. 4 examples of stable subnormal contractions $T$ are given for which the range of $\varphi(T)$ is dense 
for every $\varphi\in H^\infty$ ($\varphi\not\equiv 0$) (Corollary \ref{cor44}). 

In the next section, the definitions are recalled and the facts needed in the sequel are collected. 

\section{Definitions and preliminaries}
Symbols $\mathbb D$, $\mathbb D^-$, and $\mathbb T$ denote the open unit disc, the closed unit disc, 
and the unit circle, respectively. The normalized Lebesgue measure on $\mathbb T$ is denoted by $m$. 
For a Borel set $\tau\subset\mathbb T$ 
denote by $U_\tau$ the operator of multiplication by the independent variable acting on $L^2(\tau,m)$.
 The unilateral shift $S$ is the restriction of $U_{\mathbb T}$ on the Hardy space $H^2$ on 
$\mathbb D$ considered as a subspace of $L^2(\mathbb T,m)$.  Furthermore,  
$H^2_-=L^2(\mathbb T,m)\ominus H^2$, and $H^\infty$ is the algebra of all analytic bounded functions in $\mathbb D$. 

For a Hilbert space  $\mathcal H$ and a (closed) subspace  $\mathcal M$ of $\mathcal H$, the symbol  $P_{\mathcal M}$   denotes the orthogonal projection onto $\mathcal M$. The spectrum, the  point spectrum, and the spectral radius  of an operator $T$ are denoted by $\sigma(T)$,   
$\sigma_{\mathrm{p}}(T)$, and $r(T)$, respectively. 

Let  $\mathcal H$ and $\mathcal K$ be Hilbert spaces,   let $T\in\mathcal L(\mathcal H)$ and  $R\in\mathcal L(\mathcal K)$, 
and let $X$ be a (linear, bounded) transformation acting from $\mathcal H$ to $\mathcal  K$ such that $XT=RX$. 
If $X$ is a unitary transformation, then $T$ and $R$ 
are called  \emph{unitarily equivalent}, in notation: $T\cong R$.  If $X$ is invertible, that is, $X^{-1}$ is bounded, 
then $T$ and $R$ are called \emph{similar}, in notation: $T\approx R$.

\subsection{Isometric and unitary asymptotes}
 
To every contraction $T\in\mathcal L(\mathcal H)$ it can be associated an \emph{isometric asymptote} $(X_+, V)$ of $T$, 
where $V$ is an isometry and $X_+$ is a canonical intertwining mapping: $X_+T=VX_+$. The \emph{unitary asymptote} 
$(X,U)$ of $T$  is the minimal unitary extension of 
the  isometric asymptote of $T$. More precisely, a unitary operator $U$ is  the minimal unitary extension of $V$, 
and $X$ is a natural extension of $X_+$. For exact definition of isometric and unitary asymptote 
 see  \cite{k89}, or Sec. IX.1 in  \cite{sznfbk}, or Sec. 2 in \cite{k16}. 
The properties (i)-(v) of isometric and unitary asymptotes below will be used. The properties (i)-(iv) follow from the construction given in 
 \cite{k89} and in Sec. IX.1 from \cite{sznfbk}. Property (v) is a particular case of Theorem IX.3.5 from \cite{sznfbk}, see also \cite{k07}. 

(i) $\mathcal H_{T,0}=\ker X_+$, where $\mathcal H_{T,0}$ is the stable subspace of $T$ defined in \eqref{hhtt0}.

(ii) The range of $X_+$ is dense.

(iii) If $\mathcal M\in\operatorname{Lat}T$, then  $(X_+|_{\mathcal M}, V|_{\operatorname{clos}X_+\mathcal M})$ 
is an isometric asymptote of $T|_{\mathcal M}$.

(iv) If $\mathcal M\in\operatorname{Lat}T$ is such that $T|_{\mathcal M}$ is similar to an isometry, 
then  $X_+|_{\mathcal M}$ is bounded below. Consequently,  $X_+\mathcal M$ is closed and 
$T|_{\mathcal M}\approx V|_{X_+\mathcal M}$. 

(v) If a scalar-valued spectral measure of $U$ is $m$, then there exists  
$\mathcal M\in\operatorname{Lat}T$  such that $T|_{\mathcal M}\approx S$.  

Property (ii) of isometric asymptote has the following corollary: 
if the isometry $V$ from an isometric asymptote of a contraction $T$ is not unitary, 
then there exists a (linear, bounded) transfomation $Y$ such that $YT=SY$ and 
 $\operatorname{clos}Y\mathcal H=H^2$. This implies $\sigma_{\mathrm{p}}(T^*)=\mathbb D$. 
Moreover, if this  contraction $T$ does not have a singular unitary summand, then 
$\operatorname{clos}\theta(T)\mathcal H\neq\mathcal H$ for every inner function $\theta$, see Lemma 1.3 in \cite{g19}.

\begin{lemma}\label{asymptote} Suppose that  $\mathcal G$ and $\mathcal K$ are Hilbert spaces, 
$U\in \mathcal L(\mathcal G)$ is a unitary operator, and $N\in \mathcal L(\mathcal K)$ is 
a stable (that is, of class $C_{0\cdot}$) contraction. 
Let $\mathcal H\in\operatorname{Lat}(U\oplus N)$. Set $T=(U\oplus N)|_{\mathcal H}$. Then 
\begin{equation*} (P_{\mathcal G}|_{\mathcal H}, U|_{\operatorname{clos}P_{\mathcal G}\mathcal H})
 \end{equation*}
is an isometric asymptote of $T$.

Moreover, if $\mathcal M\in\operatorname{Lat}(U\oplus N)$ 
is such that  $(U\oplus N)|_{\mathcal M}$ is similar to an isometry, then
$P_{\mathcal G}|_{\mathcal M}$ is bounded below.  Consequently, $P_{\mathcal G}{\mathcal M}$ is closed.  
\end{lemma}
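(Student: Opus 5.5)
We will verify the definition of isometric asymptote directly. Recall that an isometric asymptote of $T$ can be characterized up to the natural equivalence as follows: $X_+$ is a contraction (or bounded operator) with $X_+T=VX_+$, $V$ is an isometry on $\operatorname{clos}X_+\mathcal H$, and $\|X_+x\|=\lim_n\|T^nx\|$ for all $x\in\mathcal H$. Indeed, in the construction of \cite{k89} and Sec. IX.1 of \cite{sznfbk} one takes the semi-inner product $\langle x,y\rangle_T=\lim_n\langle T^nx,T^ny\rangle$, the Hilbert space obtained from $\mathcal H$ by factoring out $\mathcal H_{T,0}$ and completing, and $V$ induced by $T$; any pair $(Y,W)$ with $W$ an isometry on $\operatorname{clos}Y\mathcal H$, $YT=WY$, and $\|Yx\|=\lim_n\|T^nx\|$ is then unitarily equivalent to it via $X_+x\mapsto Yx$. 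So it suffices to check these three properties for $Y=P_{\mathcal G}|_{\mathcal H}$ and $W=U|_{\operatorname{clos}P_{\mathcal G}\mathcal H}$.

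First, since $\mathcal G$ reduces $U\oplus N$, $P_{\mathcal G}(U\oplus N)=UP_{\mathcal G}$, so $YT=UY$. Hence $\operatorname{clos}Y\mathcal H$ is invariant for $U$, and $W$ is an isometry as a restriction of a unitary. Second, for $x=g\oplus k\in\mathcal H$ we have $T^nx=U^ng\oplus N^nk$, so $\|T^nx\|^2=\|g\|^2+\|N^nk\|^2\to\|g\|^2=\|Yx\|^2$, because $N$ is stable. This gives the first assertion.

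For the second assertion, let $\mathcal M\in\operatorname{Lat}(U\oplus N)$ with $R=(U\oplus N)|_{\mathcal M}$ similar to an isometry, say $R=A^{-1}V_1A$ with $A$ invertible and $V_1$ isometric. Then for $x\in\mathcal M$, $\|R^nx\|=\|A^{-1}V_1^nAx\|\ge\|A\|^{-1}\|Ax\|\ge\|A\|^{-1}\|A^{-1}\|^{-1}\|x\|$. By the computation above, $\|P_{\mathcal G}x\|=\lim_n\|R^nx\|\ge c\|x\|$ with $c=(\|A\|\|A^{-1}\|)^{-1}>0$, so $P_{\mathcal G}|_{\mathcal M}$ is bounded below and its range $P_{\mathcal G}\mathcal M$ is closed. (Alternatively this follows from the first part applied to $\mathcal M$ together with property (iv).) The only point requiring care is the identification with the abstract isometric asymptote, which rests on the norm identity and the uniqueness up to equivalence; the rest is routine.
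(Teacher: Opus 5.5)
Your proof is correct and takes essentially the same route as the paper: both rest on the identity $\lim_n((U\oplus N)^nx,(U\oplus N)^ny)=(P_{\mathcal G}x,P_{\mathcal G}y)$, which holds because $N$ is stable and which identifies the isometric asymptote through the canonical construction. The only difference is that you verify the identity directly on $\mathcal H$ and prove the bounded-below estimate by hand, whereas the paper computes the asymptote of $U\oplus N$ itself and then invokes properties (iii) and (iv).
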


\begin{proof} The statement of the lemma follows from the construction of isometric asymptote described in 
\cite{k89} or in Sec. IX.1 from \cite{sznfbk}. Namely, a new semi-inner product $\langle \cdot,\cdot\rangle$ on $\mathcal G\oplus\mathcal K$ 
used in this construction 
 is defined by the formula 
\begin{equation*}\langle x,y\rangle=\lim_{n\to\infty}((U\oplus N)^nx,(U\oplus N)^ny), \ \text{ where }\ 
x, y\in \mathcal G\oplus\mathcal K.\end{equation*} 
Assumptions on $U$ and $N$ imply 
that $\langle x,y\rangle=(P_{\mathcal G}x,P_{\mathcal G}y)$. Therefore, an isometric asymptote of $U\oplus N$ 
is $ (P_{\mathcal G}, U)$. The last relation and properties (iii) and (iv) of isometric asymptote imply
 the conclusion of the lemma.
\end{proof}
   
\subsection{Lemmas from operator theory}

\begin{lemma}\label{lemminimal} Suppose that $\mathcal G$ and $\mathcal K$ are  Hilbert spaces, and  $U\in \mathcal L(\mathcal G)$ and 
$N\in \mathcal L(\mathcal K)$  are normal operators. 
Let $\mathcal H\in\operatorname{Lat}(U\oplus N)$ be such that
\begin{equation} \label{minimal}\vee_{n=0}^\infty (U\oplus N)^{*n}\mathcal H=\mathcal G\oplus\mathcal K .
 \end{equation}
Then  $\vee_{n=0}^\infty U^{*n}\operatorname{clos}P_{\mathcal G}\mathcal H=\mathcal G$.
\end{lemma}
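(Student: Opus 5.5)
The plan is to use the fact that $P_{\mathcal G}$ intertwines $U\oplus N$ with $U$, and so also their adjoints, and then simply apply $P_{\mathcal G}$ to \eqref{minimal}. Since $\mathcal G$ and $\mathcal K$ are orthogonal summands, $\mathcal G$ reduces $U\oplus N$. Hence
\[
P_{\mathcal G}(U\oplus N)^{*n}=U^{*n}P_{\mathcal G}\quad\text{for all } n\ge 0 .
\]
This is immediate, because $(U\oplus N)^{*n}=U^{*n}\oplus N^{*n}$.

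Next, apply $P_{\mathcal G}$ to the linear span $\mathcal L=\operatorname{span}\{(U\oplus N)^{*n}h:\ n\ge0,\ h\in\mathcal H\}$. By \eqref{minimal}, $\mathcal L$ is dense in $\mathcal G\oplus\mathcal K$. Since $P_{\mathcal G}$ is bounded and onto $\mathcal G$, the set $P_{\mathcal G}\mathcal L$ is dense in $\mathcal G$. Indeed, $P_{\mathcal G}(\operatorname{clos}\mathcal L)\subset\operatorname{clos}P_{\mathcal G}\mathcal L$, and $P_{\mathcal G}(\mathcal G\oplus\mathcal K)=\mathcal G$.

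By the intertwining relation, $P_{\mathcal G}\mathcal L=\operatorname{span}\{U^{*n}P_{\mathcal G}h:\ n\ge0,\ h\in\mathcal H\}$. This set is contained in $\vee_{n\ge0}U^{*n}\operatorname{clos}P_{\mathcal G}\mathcal H$. Therefore the latter closed subspace contains a dense subset of $\mathcal G$, so it equals $\mathcal G$.

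There is no real obstacle here. The only point to check is the density step, which uses only continuity of $P_{\mathcal G}$. Normality of $U$ and $N$ is not even needed beyond the block-diagonal form of the adjoint powers.
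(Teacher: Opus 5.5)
Your proof is correct and is essentially the paper's argument in dual form. The paper sets $\mathcal G_1=\vee_{n\ge0}U^{*n}\operatorname{clos}P_{\mathcal G}\mathcal H$ and observes that $\mathcal G_1\oplus\mathcal K$ is a closed $(U\oplus N)^*$-invariant subspace containing $\mathcal H$, hence containing $\vee_{n\ge0}(U\oplus N)^{*n}\mathcal H=\mathcal G\oplus\mathcal K$; you instead push the dense span forward under $P_{\mathcal G}$ using $P_{\mathcal G}(U\oplus N)^{*n}=U^{*n}P_{\mathcal G}$. Your remark that normality is not needed applies to the paper's proof as well: the paper calls $\mathcal G_1$ reducing, which uses normality of $U$, but it only uses the $U^*$-invariance of $\mathcal G_1$, and that is immediate from the definition.
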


\begin{proof}
Set $\mathcal G_1=\vee_{n=0}^\infty U^{*n}\operatorname{clos}P_{\mathcal G}\mathcal H$.  
Then $\mathcal G_1$ is a reducing subspace for $U$. Consequently, $\mathcal G_1\oplus\mathcal K$ is 
a reducing subspace for $U\oplus N$. 
Since 
\begin{equation*} \mathcal H\subset\operatorname{clos}P_{\mathcal G}\mathcal H\oplus\operatorname{clos}
P_{\mathcal K}\mathcal H
\subset\mathcal G_1\oplus\mathcal K, 
 \end{equation*}
we conclude that 
$\vee_{n=0}^\infty (U\oplus N)^{*n}\mathcal H\subset\mathcal G_1\oplus\mathcal K$.  
This implies $\mathcal G_1=\mathcal G$. 
\end{proof}

\begin{lemma}\label{lemtau}  Suppose that $T$ is a contraction,  $\tau\subset\mathbb T$ is a Borel set,  $m(\tau)<1$, 
and $U_\tau$ is the operator of multiplication by the independent variable acting on $L^2(\tau,m)$.   
Set 
\begin{equation*}  R=U_{\tau}\oplus T.
\end{equation*}
If there exists  $\mathcal M\in\operatorname{Lat}R$ such that  $R|_{\mathcal M}\cong U_{\mathbb T}$, then 
there exists a reducing subspace $\mathcal M_0$ for $T$ such that $\mathcal M_0\neq\{0\}$ and
   $T|_{\mathcal M_0}$ is unitary. 
\end{lemma}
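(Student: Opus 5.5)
The plan is to show that $\mathcal M$ is automatically reducing for $R$ and lies in the unitary part of $R$, and then to compare spectral measures. Since $m(\tau)<1$, the operator $U_\tau$ alone cannot carry the full measure $m$ of $U_{\mathbb T}$, which forces $T$ to have a nonzero unitary part.

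First, $R$ is a contraction and $R|_{\mathcal M}$ is unitary, being unitarily equivalent to $U_{\mathbb T}$. I claim that $\mathcal M$ reduces $R$.
\begin{enumerate}[(a)]
\item For $x\in\mathcal M$ we have $\|Rx\|=\|x\|$. Since $\|R\|\le 1$, this gives $\|(I-R^*R)^{1/2}x\|^2=\|x\|^2-\|Rx\|^2=0$, so $R^*Rx=x$.
\item Since $R|_{\mathcal M}$ is onto $\mathcal M$, every $x\in\mathcal M$ has the form $x=Ry$ with $y\in\mathcal M$.
\item Therefore $R^*x=R^*Ry=y\in\mathcal M$, so $\mathcal M\in\operatorname{Lat}R^*$.
\end{enumerate}
Moreover, $\|R^nx\|=\|x\|=\|R^{*n}x\|$ for all $n$ and all $x\in\mathcal M$. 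Hence $\mathcal M$ is contained in the unitary part of $R$, that is, in the largest reducing subspace on which $R$ is unitary (the canonical decomposition, Theorem I.3.2 in \cite{sznfbk}).

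Next, I identify the unitary part of $R$. If $T=T_u\oplus T_c$ is the canonical decomposition of $T$ into a unitary part $T_u$ (acting on $\mathcal M_0$) and a completely non-unitary part $T_c$, then $R=(U_\tau\oplus T_u)\oplus T_c$. The first summand is unitary and $T_c$ is completely non-unitary, so by uniqueness of the canonical decomposition the unitary part of $R$ is $U_\tau\oplus T_u$, acting on $L^2(\tau,m)\oplus\mathcal M_0$. Thus $R|_{\mathcal M}$ is the restriction of the unitary operator $U_\tau\oplus T_u$ to a reducing subspace.

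Finally, I compare spectral measures. Let $\mu$ be a scalar spectral measure of $T_u$, with $\mu=0$ if $\mathcal M_0=\{0\}$. A scalar spectral measure of $U_\tau\oplus T_u$ is then $m|_\tau+\mu$. The restriction to a reducing subspace has scalar spectral measure absolutely continuous with respect to that of the whole operator. Since $R|_{\mathcal M}\cong U_{\mathbb T}$ has scalar spectral measure $m$, we get $m\ll m|_\tau+\mu$. As $m(\mathbb T\setminus\tau)=1-m(\tau)>0$, this forces $\mu(\mathbb T\setminus\tau)>0$, so $\mu\neq 0$ and $\mathcal M_0\neq\{0\}$. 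Then $\mathcal M_0$ is a nonzero reducing subspace for $T$ on which $T$ is unitary, as required.

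The only step needing some care is the first one, showing that an invariant subspace on which a contraction acts unitarily is reducing. It is handled by the short argument (a)–(c) above, and the rest is standard spectral theory of unitary operators.
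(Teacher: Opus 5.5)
Your proof is correct and follows essentially the paper's route. Both arguments show that $\mathcal M$ reduces $R$, place the $\mathcal H$-components of vectors in $\mathcal M$ inside the unitary part $\mathcal M_0$ of $T$, and then use $m(\tau)<1$. The paper does the middle step by a direct norm computation, while you invoke uniqueness of the canonical decomposition of $R$. You also supply two details the paper leaves implicit: why an invariant subspace on which a contraction acts unitarily is reducing, and the spectral-measure comparison $m\ll m|_\tau+\mu$ that underlies the final contradiction.
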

\begin{proof} Denote by $\mathcal H$ the space on which $T$ acts.
Set \begin{equation*} \mathcal M_0=\{x\in\mathcal H\ \colon \ \|T^nx\|=\|x\|=\|T^{*n}x\| 
\text{ for every } n\in\mathbb N\}.
 \end{equation*}
 By Theorem I.3.2 in \cite{sznfbk}, $\mathcal M_0$ is a reducing subspace for $T$, and 
$T|_{\mathcal M_0}$ is unitary. It needs to prove that $\mathcal M_0\neq\{0\}$.  

Since $R$ is a contraction and  $R|_{\mathcal M}$ is unitary, 
 $\mathcal M$ is a reducing subspace for $R$. Consequently,  $R^*|_{\mathcal M}$ is unitary. 
Let $\psi\in L^2(\tau,m)$ and $x\in\mathcal H$ be such that $\psi\oplus x\in\mathcal M$.
Then for every $n\in\mathbb N$ 
\begin{align*} 
\|U_{\tau}^n\psi\|^2+\|T^nx\|^2&=\|R^n(\psi\oplus x)\|^2=\|\psi\oplus x\|^2=\|R^{*n}(\psi\oplus x)\|^2
\\&=
\|U_{\tau}^{-n}\psi\|^2+\|T^{*n}x\|^2.
\end{align*} 
Consequently, $x\in\mathcal M_0$. If $x=0$ for every $\psi\in L^2(\tau,m)$ 
such that $\psi\oplus x\in\mathcal M$, then $\mathcal M\subset  L^2(\tau,m)$, 
a contradiction with the assumption $m(\tau)<1$. Thus,  $\mathcal M_0\neq\{0\}$. 
\end{proof}

\subsection{Lemmas from function theory}

Let $\varphi\colon\mathbb D\to\mathbb C$ be an analytic function. For $\xi\in\mathbb D^-$ set 
$\varphi_\xi(z)=\varphi(\xi z)$, $z\in\mathbb D$. If $\varphi\in H^\infty$, then  $\varphi_\xi\in H^\infty$ for every 
 $\xi\in\mathbb D^-$. Consequently, if $\xi\in\mathbb T$, then  
$\varphi_\xi(\zeta)=\varphi(\xi\zeta)$ is defined for $m$-a.e. $\zeta\in\mathbb T$. 

Recall that a singular inner function $\vartheta$ is a function from $H^\infty$ such that $\vartheta(\zeta)=1$  
for $m$-a.e. $\zeta\in\mathbb T$ and $\vartheta(z)\neq 0$ for every $z\in\mathbb D$. It is assuming that $\vartheta$   is not a constant function. In particular, $|\vartheta(0)|<1$.  

The following lemmas are well-known; the details of the proofs can be found in the version of \cite{g19} on arXiv.org.
 
\begin{lemma}\label{lemmutually} Suppose that $\vartheta$ is a singular inner function and $h\in H^2$. If 
$ h\in\vartheta_\xi H^2$ for every $\xi\in\mathbb T$, then $h\equiv 0$. 
\end{lemma}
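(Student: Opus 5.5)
The plan is to translate divisibility by the rotated singular inner functions into inequalities between singular measures on $\mathbb T$. Then a Fubini argument produces infinitely many mutually singular rotations of the measure, which cannot all be dominated by a single finite measure.

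\emph{Step 1: reduce to measure inequalities.} Write
\begin{equation*}
\vartheta(z)=c\exp\Bigl(-\int_{\mathbb T}\frac{\zeta+z}{\zeta-z}\,d\mu(\zeta)\Bigr),
\end{equation*}
where $|c|=1$ and $\mu$ is a nonzero positive finite Borel measure on $\mathbb T$ singular with respect to $m$. For $\xi\in\mathbb T$, a change of variables shows that $\vartheta_\xi$ is, up to a unimodular constant, the singular inner function of the rotated measure $\mu_\xi(E)=\mu(\xi E)$. This measure is again singular and has $\mu_\xi(\mathbb T)=\mu(\mathbb T)>0$.

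Suppose $h\not\equiv 0$. Factor $h=BS_\nu F$, where $B$ is a Blaschke product, $S_\nu$ is the singular inner function of a singular measure $\nu\ge 0$, and $F$ is outer. The hypothesis $h\in\vartheta_\xi H^2$ means that $\vartheta_\xi$ divides the inner part $BS_\nu$. Since $\vartheta_\xi$ has no zeros, it must divide $S_\nu$. By the standard description of divisibility of singular inner functions, this gives $\mu_\xi\le\nu$ for every $\xi\in\mathbb T$.

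\emph{Step 2: find infinitely many mutually singular rotations.} Let $E$ be a Borel set with $m(E)=0$ and $\mu(\mathbb T\setminus E)=0$. By Fubini and rotation invariance of $m$,
\begin{equation*}
\int_{\mathbb T}\mu(\xi E)\,dm(\xi)=\int_{\mathbb T}\int_{\mathbb T}1_{E}(\xi^{-1}\zeta)\,dm(\xi)\,d\mu(\zeta)=\int_{\mathbb T} m(\zeta E^{-1})\,d\mu(\zeta)=0.
\end{equation*}
Hence $\mu(\xi E)=0$ for $m$-a.e. $\xi$, and for such $\xi$ the measures $\mu$ and $\mu_{\xi^{-1}}$ are mutually singular, since $\mu_{\xi^{-1}}$ is carried by $\xi E$.

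More generally, $\mu_\alpha\perp\mu_\beta$ whenever $\alpha\beta^{-1}$ avoids a fixed $m$-null set. So I choose $\xi_1,\xi_2,\dots$ inductively, each time avoiding the countably many null sets determined by the earlier choices. This yields pairwise mutually singular measures $\mu_{\xi_k}$. A standard argument (take carriers and subtract the earlier ones) then gives pairwise disjoint Borel sets $E_k$ with $\mu_{\xi_k}(\mathbb T\setminus E_k)=0$.

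\emph{Step 3: conclude.} Combining the two steps,
\begin{equation*}
\nu(\mathbb T)\ge\sum_k\nu(E_k)\ge\sum_k\mu_{\xi_k}(E_k)=\sum_k\mu(\mathbb T)=\infty.
\end{equation*}
This contradicts the finiteness of $\nu$, so $h\equiv 0$.

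The main point needing care is Step 1: the identification of $\vartheta_\xi$ with the singular inner function of a rotated measure, and the fact that divisibility of singular inner functions is equivalent to the inequality of their measures. Both are classical and can be quoted from Hardy space theory. The Fubini step is routine.
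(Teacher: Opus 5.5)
Your proof is correct. Step 1 is the standard translation: $\vartheta_\xi=cS_{\mu_\xi}$ with $\mu_\xi(E)=\mu(\xi E)$, and $\vartheta_\xi$ dividing $BS_\nu$ forces $\mu_\xi\le\nu$ by uniqueness of the canonical factorization. In Step 2 the Fubini computation is right, and so is the observation that $\mu_\alpha\perp\mu_\beta$ once $\alpha\beta^{-1}$ avoids a fixed $m$-null set. Step 3 then gives the contradiction.

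The paper does not prove this lemma. It calls it well known and defers to the arXiv version of \cite{g19}, so there is no argument in the text to compare yours against.

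One simplification is worth noting: your own Fubini identity makes the inductive choice of the $\xi_k$ unnecessary. For every Borel set $A\subset\mathbb T$ the same computation gives
\begin{equation*}
\int_{\mathbb T}\mu(\xi A)\,\mathrm{d}m(\xi)=\mu(\mathbb T)\,m(A).
\end{equation*}
Since $\mu(\xi A)=\mu_\xi(A)\le\nu(A)$ for every $\xi$, averaging yields $\nu\ge\mu(\mathbb T)\,m$. Now take $A$ to be the complement of an $m$-null carrier of $\nu$; this gives $0\ge\mu(\mathbb T)>0$.

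This shortcut also avoids the disjoint-carrier construction. There, ``subtract the earlier ones'' works only if each carrier $C_k$ of $\mu_{\xi_k}$ is chosen $\mu_{\xi_j}$-null for all $j\ne k$, not for arbitrary carriers. Alternatively, use finitely many rotations and get $\nu(\mathbb T)\ge n\,\mu(\mathbb T)$ for every $n$.
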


\begin{lemma}\label{lemphitt} Let $\varphi\in H^\infty$, and let $f\in L^1(\mathbb T,m)$. Set 
 \begin{equation*} 
(\varphi\ast f)(\xi)=\int_{\mathbb T}\varphi_\xi f\mathrm{d}m, \ \  \xi\in \mathbb D^-. \end{equation*} 
Then $\varphi\ast f$ is analytic in $\mathbb D$, continuous in $\mathbb D^-$, and 
\begin{equation*}\widehat{\varphi\ast f}(n)=\widehat\varphi(n)\widehat f(-n),  \ \ \ n\geq 0.\end{equation*}
\end{lemma}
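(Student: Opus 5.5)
We verify the three assertions by first computing $\varphi\ast f$ on trigonometric polynomials and then passing to general $f\in L^1(\mathbb T,m)$ by a uniform estimate. The basic observation is that for every $\xi\in\mathbb D^-$ we have $\|\varphi_\xi\|_\infty\le\|\varphi\|_\infty$. For $|\xi|<1$ this is immediate. For $\xi\in\mathbb T$ the function $\varphi_\xi$ is a rotation of the boundary function of $\varphi$, and it is well defined $m$-a.e. because $m$ is rotation invariant. Consequently, for $f,g\in L^1(\mathbb T,m)$,
\begin{equation*}\sup_{\xi\in\mathbb D^-}|(\varphi\ast f)(\xi)-(\varphi\ast g)(\xi)|\le\|\varphi\|_\infty\|f-g\|_{L^1}.\end{equation*}

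Next, we compute $\int_{\mathbb T}\varphi_\xi\,\zeta^k\,\mathrm{d}m(\zeta)$ for $k\in\mathbb Z$ and $\xi\in\mathbb D^-$.

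For $|\xi|<1$, we have the expansion $\varphi(\xi\zeta)=\sum_{n\ge0}\widehat\varphi(n)\xi^n\zeta^n$. It converges uniformly in $\zeta\in\mathbb T$, since $|\widehat\varphi(n)|\le\|\varphi\|_\infty$. Integrating term by term shows that the integral equals $\widehat\varphi(-k)\xi^{-k}$ if $k\le0$, and $0$ if $k>0$.

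For $\xi\in\mathbb T$, we substitute $w=\xi\zeta$, using rotation invariance of $m$. This gives $\xi^{-k}\int_{\mathbb T}\varphi(w)w^k\,\mathrm{d}m(w)=\xi^{-k}\widehat\varphi(-k)$. Since the boundary function of $\varphi$ has vanishing negative Fourier coefficients, this is the same formula as in the interior case.

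Hence, for a trigonometric polynomial $g$,
\begin{equation*}(\varphi\ast g)(\xi)=\sum_{n\ge0}\widehat\varphi(n)\widehat g(-n)\xi^n \quad\text{for all }\xi\in\mathbb D^-.\end{equation*}
The sum is finite, so $\varphi\ast g$ is a polynomial, and in particular it is continuous on $\mathbb D^-$.

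For general $f\in L^1(\mathbb T,m)$, choose trigonometric polynomials $g_j\to f$ in $L^1$. By the uniform estimate above, $\varphi\ast g_j\to\varphi\ast f$ uniformly on $\mathbb D^-$. A uniform limit of polynomials on $\mathbb D^-$ is continuous on $\mathbb D^-$ and analytic in $\mathbb D$.

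It remains to identify the Taylor coefficients of $\varphi\ast f$. They are limits of the Taylor coefficients of $\varphi\ast g_j$, because coefficients are given by Cauchy integrals over a circle $|\xi|=r<1$, and these integrals converge under uniform convergence. The coefficients of $\varphi\ast g_j$ are $\widehat\varphi(n)\widehat{g_j}(-n)$, and $\widehat{g_j}(-n)\to\widehat f(-n)$. Therefore $\widehat{\varphi\ast f}(n)=\widehat\varphi(n)\widehat f(-n)$ for $n\ge0$.

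The computation for $|\xi|<1$ could also be done directly with $f$ in place of $g$, by term-by-term integration against $f$. However, continuity up to $\mathbb T$ needs the approximation argument.

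The only delicate point is the case $\xi\in\mathbb T$. There $\varphi_\xi$ is defined only almost everywhere, and one must check two things: that the integral formula agrees with the interior formula, and that the uniform bound $\|\varphi_\xi\|_\infty\le\|\varphi\|_\infty$ holds. Rotation invariance of $m$ handles both.
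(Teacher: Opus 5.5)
Your proof is correct and complete. The bound $\|\varphi_\xi\|_\infty\le\|\varphi\|_\infty$ on all of $\mathbb D^-$ is sound. So is the computation of $\varphi\ast g$ for trigonometric polynomials $g$, including the boundary case, which uses rotation invariance of $m$ and the vanishing of the negative Fourier coefficients of $\varphi$. The passage to general $f$ by $L^1$-density is also sound.

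The paper does not actually prove this lemma. It calls it well known and defers the details to the arXiv version of \cite{g19}, so there is no in-text argument to match.

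The standard route works on the circle first. For $\eta\in\mathbb T$ one writes $(\varphi\ast f)(\eta)=\int_{\mathbb T}\varphi(w)f(\overline\eta w)\,\mathrm{d}m(w)$, which is continuous because translation is continuous in $L^1(\mathbb T,m)$. A Fubini computation gives its Fourier coefficients as $\widehat\varphi(n)\widehat f(-n)$, and these vanish for $n<0$, so the boundary function lies in the disc algebra. A second Fubini computation with the Poisson kernel shows that the values of $\varphi\ast f$ in $\mathbb D$ are the Poisson integral of these boundary values.

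Your argument replaces continuity of translation and the Poisson identification by two ingredients: density of trigonometric polynomials in $L^1$, and the single uniform estimate $\sup_{\xi\in\mathbb D^-}|(\varphi\ast f)(\xi)-(\varphi\ast g)(\xi)|\le\|\varphi\|_\infty\|f-g\|_{L^1}$. This gives analyticity, continuity up to $\mathbb T$ and the coefficient formula all at once from the polynomial case.

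Some such $L^1$-continuity input is genuinely needed here. The dominated-convergence argument the paper uses for boundary continuity in Lemma \ref{lemthetadd} relies on pointwise convergence of $\vartheta(\xi_j z)$ for $z\in\mathbb D$. It does not transfer verbatim to integrals over $\mathbb T$, where $\varphi_\xi$ is only an a.e.-defined boundary function.
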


The following lemma is  needed.

\begin{lemma}\label{lemphidd} 
Let $\mu$ be a positive finite Borel measure on $\mathbb D$,  let $F\in L^1(\mathbb D,\mu)$, 
and let $\varphi\colon\mathbb D\to\mathbb C$ be an analytic function.
Set 
 \begin{equation*} 
\Phi(\xi)=\int_{\mathbb D}\varphi_\xi F\mathrm{d}\mu, \ \  \xi\in \mathbb D . \end{equation*} 
Then $\Phi$ is analytic in $\mathbb D$  
 and
\begin{equation*}\widehat{\Phi}(n)=\widehat\varphi(n)\int_{\mathbb D}z^n F(z)\mathrm{d}\mu(z), \ \ \ n\geq 0.\end{equation*}
\end{lemma}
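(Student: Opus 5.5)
We expand $\varphi$ in its Taylor series and integrate term by term; dominated convergence justifies this on compact subdisks, and analyticity then follows by uniqueness of power series coefficients. Write $\varphi(z)=\sum_{n\ge0}\widehat\varphi(n)z^n$, which converges absolutely and locally uniformly in $\mathbb D$. For fixed $\xi\in\mathbb D$ with $|\xi|\le r<1$ and every $z\in\mathbb D$ we have $|\xi z|\le r$, so
\begin{equation*}
\sum_{n\ge0}|\widehat\varphi(n)|\,|\xi|^n|z|^n\le \sum_{n\ge0}|\widehat\varphi(n)|r^n=:C_r<\infty,
\end{equation*}
where $C_r$ is finite because the power series of $\varphi$ has radius of convergence at least $1$. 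In particular $|\varphi_\xi|\le C_r$ on $\mathbb D$, so $\varphi_\xi F\in L^1(\mathbb D,\mu)$ and $\Phi(\xi)$ is well defined.

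Next we interchange sum and integral. The partial sums $\sum_{n\le K}\widehat\varphi(n)\xi^nz^nF(z)$ are dominated by $C_r|F(z)|$, which lies in $L^1(\mu)$ since $\mu$ is finite and $F\in L^1(\mu)$. The dominated convergence theorem therefore gives
\begin{equation*}
\Phi(\xi)=\sum_{n\ge0}\widehat\varphi(n)\Big(\int_{\mathbb D}z^nF(z)\,\mathrm d\mu(z)\Big)\xi^n ,\qquad |\xi|\le r.
\end{equation*}
Set $c_n=\widehat\varphi(n)\int_{\mathbb D} z^nF\,\mathrm d\mu$. Since $|z|<1$, we have $\big|\int_{\mathbb D} z^nF\,\mathrm d\mu\big|\le\|F\|_{L^1(\mu)}$, hence $|c_n|\le|\widehat\varphi(n)|\,\|F\|_{L^1(\mu)}$. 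Thus $\sum c_n\xi^n$ has radius of convergence at least $1$. Because $r<1$ was arbitrary, $\Phi$ equals this power series throughout $\mathbb D$. Consequently $\Phi$ is analytic in $\mathbb D$, and by uniqueness of Taylor coefficients $\widehat\Phi(n)=c_n$, which is the claimed formula.

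The only delicate point is the domination step. Note that $\varphi$ need not be bounded, since it is only assumed analytic. The bound on $\varphi_\xi$ must therefore come from restricting $\xi$ to $|\xi|\le r<1$ and using absolute convergence of the Taylor series on $|w|\le r$, not from any global bound on $\varphi$. This is also why, unlike Lemma~\ref{lemphitt}, no claim of continuity up to $\mathbb D^-$ is made.
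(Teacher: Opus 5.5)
Your proposal is correct and follows essentially the same route as the paper: expand $\varphi_\xi$ in its Taylor series, use the absolute bound $\sum_n|\widehat\varphi(n)||\xi|^n|z|^n|F(z)|\le C_r|F(z)|$ for $|\xi|\le r<1$ to interchange sum and integral, and read off the coefficients. The paper states this domination in one line and leaves the interchange implicit; you simply spell out the details.
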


\begin{proof} Since $\varphi$ is analytic in $\mathbb D$, we have  $\limsup_n|\widehat\varphi(n)|^{1/n}\leq 1$ and 
$\varphi_\xi(z)=\sum_{n=0}^\infty\widehat\varphi(n)\xi^nz^n$, $z\in\mathbb D$. Consequently, 
\begin{equation*} 
\int_{\mathbb D}\sum_{n=0}^\infty|\widehat\varphi(n)||\xi|^n |z|^n | F(z)|\mathrm{d}\mu(z)<\infty, \end{equation*} 
because $\xi\in\mathbb D$, and the statement of the lemma follows.
\end{proof}

The following lemma is well-known;  the proof is contained in the proof of Lemma 2.6 in \cite{gamaljot}.

\begin{lemma}\label{lemalpha} Let $\alpha$ be a positive finite Borel measure on $\mathbb D$. 
Then there exists a  continuous, strongly increasing function  
 $u\colon[0,1)\to (0,\infty)$ such that  $u(r)\to\infty$ when $r\to 1$ and 
\begin{equation*} \int_{\mathbb D}u(|z|)^2\mathrm{d}\alpha(z)<\infty.\end{equation*}
\end{lemma}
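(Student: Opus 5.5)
We want a continuous, strictly increasing $u\colon[0,1)\to(0,\infty)$ with $u(r)\to\infty$ as $r\to1$ and $\int_{\mathbb D}u(|z|)^2\,\mathrm{d}\alpha(z)<\infty$. The plan is to build $u$ from the tail of the radial distribution of $\alpha$. Set $\beta(r)=\alpha(\{z\in\mathbb D\colon |z|\ge r\})$ for $r\in[0,1)$. Then $\beta$ is nonincreasing, $\beta(0)=\alpha(\mathbb D)<\infty$, and $\beta(r)\to0$ as $r\to1$ by countable additivity, since $\bigcap_r\{|z|\ge r\}=\emptyset$ inside $\mathbb D$. If $\alpha(\mathbb D)=0$ any admissible $u$ works, for instance $u(r)=1/(1-r)$, so assume $\alpha(\mathbb D)>0$.

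The cleanest route avoids all continuity issues of $\beta$ by a sequence. Choose radii $0=r_0<r_1<r_2<\dots\to1$ with $\beta(r_k)\le 4^{-k}$ for $k\ge1$, which is possible because $\beta(r)\to0$. Define $u$ as the piecewise linear function with $u(r_k)=k+1$, linear on each $[r_k,r_{k+1}]$. Then $u$ is continuous, strictly increasing, positive, and $u(r)\to\infty$ as $r\to1$. On the annulus $A_k=\{r_k\le|z|<r_{k+1}\}$ we have $u(|z|)\le k+2$, so
\begin{equation*}
\int_{\mathbb D}u(|z|)^2\,\mathrm{d}\alpha(z)\le\sum_{k=0}^\infty (k+2)^2\alpha(A_k)\le 4\alpha(\mathbb D)+\sum_{k\ge1}(k+2)^2 4^{-k}<\infty,
\end{equation*}
using $\alpha(A_k)\le\beta(r_k)$. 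The annuli $A_k$ cover $\mathbb D$ because $r_k\to1$.

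The only point needing care is choosing the $r_k$ strictly increasing and tending to $1$. Pick $r_k\ge\max(r_{k-1},1-1/k)$ with $\beta(r_k)\le4^{-k}$, and increase it slightly if it equals $r_{k-1}$; this is harmless because $\beta$ is nonincreasing. Beyond this there is no genuine obstacle: the statement is a standard ``slow growth'' construction, and the summable tail bound is the key step.
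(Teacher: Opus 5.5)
Your proof is correct and complete. The paper gives no proof of its own here (it calls the lemma well known and refers to the proof of Lemma 2.6 in \cite{gamaljot}). Your argument is the standard one: choose radii $r_k\to1$ with $\alpha(\{|z|\ge r_k\})\le 4^{-k}$, then take $u$ piecewise linear with slowly growing node values $u(r_k)=k+1$, so that $\sum_k (k+2)^2\alpha(A_k)$ converges.
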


The following lemma is Lemma 5.6 in \cite{est} cited for a convenience.

\begin{lemma}[\cite{est}]\label{lemest} Let $u\colon[0,1)\to (0,\infty)$ be a continuous, strongly increasing function such that 
 $u(r)\to\infty$ when $r\to 1$. 
Then there exists a singular inner function $\vartheta$ such that 
\begin{equation*} \sup_{z\in\mathbb D}\frac{1}{u(|z|)|\vartheta(z)|}=C<\infty.\end{equation*}
\end{lemma}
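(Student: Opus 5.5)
The plan is to build $\vartheta$ as the singular inner function of a suitably ``thin but not too thin'' positive singular measure $\nu$ on $\mathbb T$. Write $\vartheta=\exp\bigl(-\int_{\mathbb T}\frac{\zeta+z}{\zeta-z}\,\mathrm{d}\nu(\zeta)\bigr)$. Then
\begin{equation*}
-\log|\vartheta(z)|=\int_{\mathbb T}P_z\,\mathrm{d}\nu ,
\end{equation*}
where $P_z$ is the Poisson kernel. The desired inequality $1/(u(|z|)|\vartheta(z)|)\le C$ is then equivalent to
\begin{equation*}
\int_{\mathbb T}P_z\,\mathrm{d}\nu\le \log u(|z|)+\log C .
\end{equation*}

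First I reduce to a convenient $u$. Replacing $u$ by a smaller function $\tilde u\le u$ only strengthens the conclusion, since $1/(u|\vartheta|)\le 1/(\tilde u|\vartheta|)$. So I may set $v(h)=\varepsilon\log\tilde u(1-h)$ and assume the following: $v(h)\to\infty$ as $h\to 0$, $v$ is decreasing in $h$, and $v(h)\le 2v(2h)$ (slow growth, doubling). This can be arranged by taking a slowly growing minorant of $\log u$ and adjusting near $r=0$ by constants.

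Second, I estimate the Poisson integral through the modulus of continuity of $\nu$. Suppose $\nu(I)\le |I|\,w(|I|)$ for every arc $I$, where $w(h)\le v(h)$ and $h\,w(h)$ is increasing. Let $z=re^{it}$ and $\delta=1-r$. Split $\mathbb T$ into the dyadic arcs $I_k$ centred at $e^{it}$ of length $2^k\delta$. On $I_k\setminus I_{k-1}$ one has $P_z\lesssim 1/(4^k\delta)$. Hence
\begin{equation*}
\int P_z\,\mathrm{d}\nu\lesssim\sum_{k\ge0}\frac{2^k\delta\, w(2^k\delta)}{4^k\delta}\le\sum_k 2^{-k}v(2^k\delta)\le 2v(\delta)=2\varepsilon\log\tilde u(r).
\end{equation*}
Choosing $\varepsilon$ small and absorbing constants (for $r$ bounded away from $1$ everything is bounded, because $u>0$ is continuous and $\vartheta$ has no zeros) gives the bound with some finite $C$.

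Third, and this is the main obstacle, I must produce a nonzero \emph{singular} measure $\nu$ with $\nu(I)\le |I|\,w(|I|)$ for a prescribed, arbitrarily slowly growing $w$. I would use a Cantor-type construction on an arc:
\begin{itemize}
\item At stage $n$, each remaining interval is divided into $M_n$ equal pieces, and a proportion $p_n<1$ of them is kept.
\item $\nu$ is the weak limit of normalized Lebesgue measure on the stage-$n$ sets.
\item The $p_n$ are chosen with $p_n\to1$ so slowly that $\prod_{i\le n}p_i^{-1}\le w(\ell_n)$, where $\ell_n$ is the stage-$n$ interval length, yet still $\prod p_i=0$.
\end{itemize}
This is possible because $w\to\infty$. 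Then $\nu$ is carried by a Lebesgue-null compact set, so it is singular. Its density at stage $n$ is $\prod_{i\le n}p_i^{-1}$, which gives $\nu(J)\le|J|\,w(|J|)$ for stage intervals $J$. For an arbitrary arc $I$, I compare it with the stage intervals of comparable length, using the doubling property of $w$. The delicate bookkeeping lies in choosing $M_n$ (say $M_n$ bounded, so that lengths decrease geometrically) and keeping the kept pieces spread out enough that arcs straddling several stage intervals are still controlled. Alternatively, one can cite the classical existence of singular measures with modulus of continuity $o(h\,w(h))$ (Kahane–Salem).
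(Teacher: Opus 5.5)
The paper gives no proof of this lemma; it only quotes Lemma~5.6 of Esterle. Your proposal is therefore an independent, self-contained route, and its overall architecture is right. With $-\log|\vartheta(z)|=\int_{\mathbb T}P_z\,\mathrm{d}\nu$, your dyadic estimate gives $-\log|\vartheta(z)|\le A\sum_k 2^{-k}w(2^k\delta)\le 2A\,w(\delta)$ with an absolute constant $A$. This step uses only that $w$ is nonincreasing in $h$. So the doubling condition, the monotonicity of $h\,w(h)$ and the passage to a ``slowly growing minorant'' are never needed: $v(h)=\varepsilon\log u(1-h)$ is already monotone and tends to $\infty$. The region $|z|\le r_0$, where $\log u$ may be negative, is handled by compactness, as you say. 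The whole content of the lemma is thus the existence of a nonzero singular measure $\nu$ with $\nu(I)\le|I|\,w(|I|)$ for a prescribed $w$ tending to $\infty$ arbitrarily slowly.

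That is exactly where your sketch fails as written. Write $D_n=\prod_{i\le n}p_i^{-1}$ for the stage-$n$ density. If $M_n\le M$ and a proportion $p_n<1$ is kept at \emph{every} stage, then $p_n\le 1-1/M$, so $p_n\not\to 1$. Since $\ell_n\ge \ell_0M^{-n}$, you get $D_n\ge (M/(M-1))^n\ge(\ell_0/\ell_n)^{c}$ with $c=\log(M/(M-1))/\log M>0$. Hence the requirement $D_n\le w(\ell_n)$ fails for every $w$ growing slower than a power of $1/h$, which is precisely the case of interest.

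The repair is to allow $p_n=1$:
\begin{itemize}
\item Keep $M_n\equiv M$, and discard one of the $M$ pieces of each surviving interval only at a sparse sequence of stages $n_1<n_2<\cdots$, chosen so that $D_{n_j}=(M/(M-1))^j\le w(\ell_{n_j})$. Discard nothing at the other stages.
\item Then $D_n\le w(\ell_n)$ for all $n$ by monotonicity of $w$.
\item The stage-$n_j$ set has Lebesgue measure $\ell_0(1-1/M)^j\to 0$, so $\nu$ is singular.
\item For an arc with $\ell_{n+1}\le |I|<\ell_n$ you just count: $I$ meets at most two stage-$n$ grid intervals, each of $\nu$-mass at most $\ell_nD_n$. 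So $\nu(I)\le 2\ell_nD_n\le 2M|I|\,w(\ell_n)\le 2M|I|\,w(|I|)$.
\end{itemize}
No ``spreading out'' of the kept pieces is needed, and the factor $2M$ is absorbed by rescaling $\nu$. With this correction, or with a precise reference for the existence of such measures in place of the Cantor sketch, your argument is a complete proof.
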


The proof of the following lemma  is contained in the proof of Lemma 2.6 in \cite{gamaljot}.

\begin{lemma}\label{lemsubspace}
Let  $\mu$ be  a positive finite  Borel  measure on $\mathbb D$, 
and 
\begin{equation*}N = \oplus_{n=1}^\infty N_{\mu|\Delta_n},
\end{equation*}
where $\Delta_n\subset\mathbb D$ are Borel sets and $N_{\mu|\Delta_n}$ is the operator 
of multiplication by the independent variable  
on $L^2(\Delta_n,\mu)$. 
(Note that it is not assumed here that $\Delta_n$ are disjoint; moreover, it is possible that 
$\Delta_n=\Delta_k$ for some $n\neq k$. On the other hand, it is possible that $\Delta_n=\emptyset$ for sufficiently large $n$.)

Let $f=\oplus_{n=1}^\infty f_n\in\oplus_{n=1}^\infty L^2(\Delta_n,\mu)$, where $f_n\in L^2(\Delta_n,\mu)$. 
Set 
\begin{equation*}
\mathrm{d}\alpha(z)=\Big(\sum_{n=1}^\infty|f_n(z)|^2\Big)\mathrm{d}\mu(z).
\end{equation*}
Let $u$ be a function from Lemma \ref{lemalpha} applied to $\alpha$, and let  $\vartheta$ be a singular inner function 
from  Lemma \ref{lemest} applied to $u$. 
Set $g= \oplus_{n=1}^\infty f_n/\vartheta$. Let $\{r_n\}_{n=1}^\infty\subset(0,1)$ be such
$r_n\to 1$. Set $\varphi_n(z)=1/\vartheta(r_nz)$,  $z\in\mathbb D$.  Then  $\varphi_n\in H^\infty$ 
for every $n\geq 1$
and $g=\lim_n\varphi_n(N)f$. 
\end{lemma}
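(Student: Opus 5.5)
The plan is to reduce everything to a single scalar integral against the measure $\alpha$ and then apply dominated convergence. The domination comes from the estimate of Lemma \ref{lemest} combined with the monotonicity of $u$.

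First, I will check that $\varphi_n\in H^\infty$. Since $\vartheta$ is analytic and zero-free in $\mathbb D$, the function $1/\vartheta$ is analytic in $\mathbb D$. Hence $\varphi_n(z)=1/\vartheta(r_nz)$ is analytic on the larger disc $\{|z|<1/r_n\}$. In particular it is continuous on $\mathbb D^-$, hence bounded, so $\varphi_n\in H^\infty$.

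Since $N$ is the orthogonal sum of multiplication operators on $L^2(\Delta_k,\mu)$, the operator $\varphi_n(N)$ acts as multiplication by $\varphi_n$ on each summand. Thus
\begin{equation*}
\varphi_n(N)f=\oplus_{k=1}^\infty \varphi_n f_k .
\end{equation*}

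Next, I will verify that $g$ really lies in $\oplus_{k=1}^\infty L^2(\Delta_k,\mu)$. By Lemma \ref{lemest}, $1/|\vartheta(z)|\le C\,u(|z|)$ for all $z\in\mathbb D$. Therefore
\begin{equation*}
\sum_{k=1}^\infty\int_{\Delta_k}\frac{|f_k|^2}{|\vartheta|^2}\,\mathrm{d}\mu=\int_{\mathbb D}\frac{\mathrm{d}\alpha}{|\vartheta|^2}\le C^2\int_{\mathbb D}u(|z|)^2\,\mathrm{d}\alpha(z)<\infty
\end{equation*}
by the choice of $u$ in Lemma \ref{lemalpha}. Here I use that $\mu$, and hence $\alpha$, lives on the open disc $\mathbb D$, so $\vartheta$ never vanishes on its support.

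For the convergence, the same bookkeeping gives
\begin{equation*}
\|\varphi_n(N)f-g\|^2=\int_{\mathbb D}\Big|\frac{1}{\vartheta(r_nz)}-\frac{1}{\vartheta(z)}\Big|^2\mathrm{d}\alpha(z).
\end{equation*}
For each fixed $z\in\mathbb D$ the integrand tends to $0$, by continuity of $1/\vartheta$ at $z$ and $r_nz\to z$.

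The key step, and the only real obstacle, is finding an integrable majorant independent of $n$. Here I will use that $u$ is increasing. Since $r_n|z|\le|z|$, we get
\begin{equation*}
\frac{1}{|\vartheta(r_nz)|}\le C\,u(r_n|z|)\le C\,u(|z|).
\end{equation*}
Hence the integrand is bounded by $4C^2u(|z|)^2$, which is $\alpha$-integrable by Lemma \ref{lemalpha}. The dominated convergence theorem then gives $\|\varphi_n(N)f-g\|\to0$, that is, $g=\lim_n\varphi_n(N)f$.
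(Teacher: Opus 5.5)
Your proof is correct and follows the paper's route. The paper defers to the proof of Lemma 2.6 in \cite{gamaljot}, and the same argument appears in its proof of Lemma \ref{lemthetadd}: you bound $1/|\vartheta(r_nz)|\le C\,u(r_n|z|)\le C\,u(|z|)$ using Lemma \ref{lemest} and the monotonicity of $u$, which gives the $n$-independent majorant $4C^2u(|z|)^2\in L^1(\alpha)$, and then apply dominated convergence.
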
 

The following lemma is  needed.

\begin{lemma}\label{lemthetadd} 
 Let $u$ and $\vartheta$ be as in Lemma \ref{lemest}. 
Suppose that $\mu$ is a positive finite Borel measure on $\mathbb D$, 
 $F\in L^1(\mathbb D,\mu)$, and  
\begin{equation*}  \int_{\mathbb D}u(|z|)||F(z)|\mathrm{d}\mu(z)<\infty.
 \end{equation*} 
For $\xi\in\mathbb D^-$ set 
\begin{equation*} G(\xi)=\int_{\mathbb D}\frac{F}{\vartheta_\xi}\mathrm{d}\mu. \end{equation*} 
Then $G$ is analytic in $\mathbb D$,  continuous in $\mathbb D^-$
and 
\begin{align}\label{ggdd}\widehat G(n)=\widehat{\frac{1}{\vartheta}}(n)
\int_{\mathbb D}z^n F(z)\mathrm{d}\mu(z), \ \ \ n\geq 0.\end{align}
\end{lemma}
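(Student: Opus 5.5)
The plan is to reduce to Lemma \ref{lemphidd} by checking that $G$ is well defined and continuous on $\mathbb D^-$. The key bound comes from Lemma \ref{lemest}. For $\xi\in\mathbb D^-$ and $z\in\mathbb D$ we have $|\xi z|\le |z|<1$. Since $u$ is increasing, $u(|\xi z|)\le u(|z|)$, and therefore
\begin{equation*}
\frac{1}{|\vartheta_\xi(z)|}=\frac{1}{|\vartheta(\xi z)|}\le C\,u(|\xi z|)\le C\,u(|z|).
\end{equation*}
The assumption $\int_{\mathbb D}u(|z|)|F(z)|\,\mathrm d\mu(z)<\infty$ then shows that the integrand $F/\vartheta_\xi$ is dominated by the $\mu$-integrable function $C\,u(|z|)|F(z)|$, uniformly in $\xi\in\mathbb D^-$. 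In particular $G(\xi)$ is defined for every $\xi\in\mathbb D^-$, including $\xi\in\mathbb T$: here $\vartheta_\xi$ is evaluated only at points $\xi z\in\mathbb D$, so no boundary values are involved.

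For continuity on $\mathbb D^-$, fix $z\in\mathbb D$. The map $\xi\mapsto 1/\vartheta(\xi z)$ is continuous on $\mathbb D^-$, since $\xi z$ ranges in the compact disc of radius $|z|$ inside $\mathbb D$, where $\vartheta$ is analytic and has no zeros. Given the uniform majorant above, the dominated convergence theorem yields continuity of $G$ on $\mathbb D^-$.

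For analyticity and the Taylor coefficients, I apply Lemma \ref{lemphidd} with $\varphi=1/\vartheta$. This function is analytic in $\mathbb D$ because a singular inner function has no zeros in $\mathbb D$. Lemma \ref{lemphidd} requires $F\in L^1(\mathbb D,\mu)$, which holds by hypothesis. It gives that $\Phi(\xi)=\int_{\mathbb D}\varphi_\xi F\,\mathrm d\mu=G(\xi)$ is analytic for $\xi\in\mathbb D$, with $\widehat G(n)=\widehat{(1/\vartheta)}(n)\int_{\mathbb D}z^nF(z)\,\mathrm d\mu(z)$, which is \eqref{ggdd}.

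There is no real obstacle. The only point needing care is the uniform domination up to the boundary $|\xi|=1$, which relies on the monotonicity of $u$ combined with the estimate of Lemma \ref{lemest}.
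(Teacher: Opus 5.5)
Your proposal is correct and follows essentially the paper's proof. You apply Lemma \ref{lemphidd} with $\varphi=1/\vartheta$ to get analyticity and the coefficients, and you get continuity from dominated convergence with the majorant $C\,u(|z|)|F(z)|$, which comes from Lemma \ref{lemest} and the monotonicity of $u$. The paper applies that dominated-convergence argument only at points of $\mathbb T$ and lets analyticity handle the interior, while you cover all of $\mathbb D^-$ at once; this difference is immaterial.
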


\begin{proof} Since $\frac{1}{\vartheta}$ is analytic in $\mathbb D$, Lemma \ref{lemphidd} is applied. Therefore,
  $G$ is analytic in $\mathbb D$, and \eqref{ggdd} is fulfilled. 

Let $\xi_0\in\mathbb T$, $\{\xi_j\}_{j=1}^\infty\subset\mathbb D^-$, and $\xi_j\to\xi_0$. 
Then  
\begin{equation*}\frac{|F(z)|}{|\vartheta_{\xi_j}(z)|}\leq C|F(z)|u(|\xi_jz|)\leq C|F(z)|u(|z|).
 \end{equation*} 
Since $\vartheta(\xi_j z)\to \vartheta(\xi_0 z)$ for every $z\in\mathbb D$, the Lebesgue convergent  theorem implies 
 $G(\xi_j)\to G(\xi_0)$. Consequently, $G$ is continuous at every $\xi_0\in\mathbb T$. 
Thus, $G$ is continuous in $\mathbb D^-$.
\end{proof}

\section{Main result}

Recall that $m$ is the normalized Lebesgue measure on the unit circle $\mathbb T$.

\begin{theorem}\label{thm11} Suppose that $T$ is a non-stable (that is, not of class $C_{0\cdot}$) subnormal contraction, 
$U$ is a unitary summand of the minimal normal extension of $T$, and $m$ is a scalar-valued spectral measure for $U$. 
Then  there exists $\mathcal M\in \operatorname{Lat}T$ such that  $T|_{\mathcal M}\cong U_{\mathbb T}$ or there exists 
a singular inner function $\theta$ such that the range of $\theta(T)$ is not dense.
\end{theorem}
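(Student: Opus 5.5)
The plan is to reduce to the case where the isometric asymptote is unitary, and then argue by contradiction using the rotated family $\vartheta_\xi$, $\xi\in\mathbb T$, of a single singular inner function.

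\emph{Reduction.} By the discussion in the Introduction we may assume $T=(U\oplus N)|_{\mathcal H}$ with $\mathcal H\in\operatorname{Lat}(U\oplus N)$, where:
\begin{itemize}
\item $U\in\mathcal L(\mathcal G)$ is an absolutely continuous unitary operator whose scalar spectral measure is $m$;
\item $N\in\mathcal L(\mathcal K)$ is normal with spectral measure carried on $\mathbb D$;
\item \eqref{minimal} holds.
\end{itemize}
By Lemma \ref{asymptote}, $(P_{\mathcal G}|_{\mathcal H},V)$ with $V=U|_{\operatorname{clos}P_{\mathcal G}\mathcal H}$ is an isometric asymptote of $T$. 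If $V$ is not unitary, the corollary of property (ii) (Lemma 1.3 in \cite{g19}) gives $\operatorname{clos}\theta(T)\mathcal H\neq\mathcal H$ for every inner $\theta$; this applies because $T$ has no singular unitary summand, and we are done. So assume $\operatorname{clos}P_{\mathcal G}\mathcal H$ reduces $U$. Lemma \ref{lemminimal} then gives $\operatorname{clos}P_{\mathcal G}\mathcal H=\mathcal G$, so $P_{\mathcal G}|_{\mathcal H}$ has dense range in $\mathcal G$. Represent $N$ as in Lemma \ref{lemsubspace}, $N=\oplus_n N_{\mu|\Delta_n}$, and $U$ as a direct sum of operators $U_{\tau_k}$ with $\tau_1=\mathbb T$.

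\emph{Main dichotomy.} Suppose $\operatorname{clos}\theta(T)\mathcal H=\mathcal H$ for every singular inner $\theta$; we want $\mathcal M\in\operatorname{Lat}T$ with $T|_{\mathcal M}\cong U_{\mathbb T}$.
\begin{enumerate}
\item Fix $x=g\oplus f\in\mathcal H$ with $P_{\mathcal G}x=g$ having nonzero component in the $U_{\mathbb T}$ summand. Apply Lemmas \ref{lemalpha} and \ref{lemest} to the measure $\alpha$ built from $f$ as in Lemma \ref{lemsubspace}, obtaining $u$ and $\vartheta$. Use $u$ to dominate the relevant integrands, so that $\int u(|z|)|F|\,d\mu<\infty$ for the densities $F$ occurring below.
\item By Lemma \ref{lemsubspace} applied to rotations of $N$, the vectors $f/\vartheta_\xi$ are limits of $\varphi_n(N)f$. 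Combined with density of $\vartheta_\xi(T)\mathcal H$, this should let us solve ``$\vartheta_\xi(T)y\approx x$'' with control.
\item Dually, take $y=g'\oplus f'\in\mathcal H$ and form the pairing $G(\xi)=\bigl((\vartheta_\xi(U\oplus N))^{-1}x,y\bigr)$. It splits into a $\mathbb T$-part, handled by Lemma \ref{lemphitt} with $f=g\bar g'$ (or its inverse version), and a $\mathbb D$-part, handled by Lemma \ref{lemthetadd} with $F$ built from $f\bar f'$.
\item These lemmas give continuity of $G$ on $\mathbb D^-$ together with its Taylor coefficients, which are products of $\widehat{1/\vartheta}(n)$ with the moments $((U\oplus N)^n x,y)$.
\end{enumerate}

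\emph{Conclusion and obstacle.} The density hypothesis for every $\vartheta_\xi$, $\xi\in\mathbb T$, should force the $H^2$-function encoding the part of $x$ that cannot be realised inside the unitary part to lie in $\vartheta_\xi H^2$ for all $\xi$. Lemma \ref{lemmutually} then makes it vanish. This would show that the vector $P_{\mathcal G}x$ in the $U_{\mathbb T}$ summand is attained, isometrically under all powers, by elements of $\mathcal H$, and hence yields a subspace of $\mathcal H$ on which $T$ acts as $U_{\mathbb T}$. Lemma \ref{lemtau} (with $\tau$ the complementary absolutely continuous part, $m(\tau)<1$) is the tool I would use to convert this into an honest $\mathcal M\in\operatorname{Lat}T$ with $T|_{\mathcal M}\cong U_{\mathbb T}$, or else into a unitary reducing piece. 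Property (v) of the asymptote gives $T|_{\mathcal M'}\approx S$ for some invariant $\mathcal M'$, and Lemma \ref{asymptote} makes $P_{\mathcal G}|_{\mathcal M'}$ bounded below; this is the starting point from which I would try to extract the isometric vectors.

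The main obstacle is step 3 and its use: identifying exactly which function must lie in $\vartheta_\xi H^2$ for every $\xi$, and justifying the boundary continuity on $\mathbb T$ uniformly in $\xi$. This is precisely where the Esterle function $\vartheta$ tied to $u$ (Lemma \ref{lemest}) and the domination in Lemma \ref{lemthetadd} must be used.
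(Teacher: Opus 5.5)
\textbf{There is a genuine gap.} Your reduction is the paper's: a non-unitary isometric asymptote gives non-dense range for every inner $\theta$, and otherwise $\operatorname{clos}P_{\mathcal G}\mathcal H=\mathcal G$ by Lemma~\ref{lemminimal}. After that, the proposal only assembles the right lemmas. The step you call ``the main obstacle'' is the entire proof, and the mechanism you sketch does not supply it.

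First, you never use $\mathcal H_1\in\operatorname{Lat}T$ with $T|_{\mathcal H_1}\approx S$ to fix coordinates. By Lemma~\ref{asymptote} one can arrange $\mathcal G=\mathcal G_0\oplus L^2(\mathbb T,m)$ with $P_{\mathcal G}\mathcal H_1=H^2$ exactly, and $\mathcal H_1=\vee_{k\ge0}(U\oplus N)^k(0\oplus1\oplus g_0)$ for some $g_0\in\mathcal K$. With $\mathcal N_1=(\mathcal G\oplus\mathcal K)\ominus\mathcal H_1$ this yields
\begin{itemize}
\item $\mathcal G_0\oplus H^2_-\oplus\{0\}\subset\mathcal N_1$,
\item $P_{\mathcal H}(\{0\}\oplus H^2_-\oplus\{0\})\subset\mathcal N_1$,
\item $\mathcal N_1\cap(\{0\}\oplus H^2\oplus\{0\})=\{0\}$.
\end{itemize}
These are the relations that eventually feed Lemma~\ref{lemmutually}. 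They also show what the first alternative really is: if $\{0\}\oplus H^2_-\oplus\{0\}\subset\mathcal H$, then invariance under $U\oplus N$ gives $\{0\}\oplus L^2(\mathbb T,m)\oplus\{0\}\subset\mathcal H$, and this is $\mathcal M$. Nothing has to be ``extracted''. Lemma~\ref{lemtau} has no place here: it serves Theorem~\ref{thm12}, where it needs $m(\tau)<1$, whereas here the spectral measure of $U$ is $m$. Its output, a unitary reducing piece of $T$, is not one of the alternatives of Theorem~\ref{thm11}. Note also that $\vartheta$ must be built \emph{after} a witness $\psi_0\in H^2_-$ with $0\oplus\psi_0\oplus0\notin\mathcal H$ is fixed. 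So the argument runs ``no such $\mathcal M$ $\Rightarrow$ construct $\theta$'', not in the direction you set up.

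Second, density of $\vartheta_\xi(T)\mathcal H$ gives no ``control'' for solving $\vartheta_\xi(T)y\approx x$. The missing idea is to manufacture explicit vectors in $\ker\vartheta_\xi(T)^*$.
\begin{enumerate}
\item Write $y_0=P_{\mathcal H}(0\oplus\psi_0\oplus0)=x_0\oplus(\psi_1+h_0)\oplus f_0$ with $\psi_1\in H^2_-$ and $h_0\in H^2$.
\item Since $\vartheta_\xi(T)^*=P_{\mathcal H}\vartheta_\xi(U\oplus N)^*|_{\mathcal H}$ and $(\mathcal G\oplus\mathcal K)\ominus\mathcal H$ is invariant for $(U\oplus N)^*$, $y_0$ has two preimages under $\vartheta_\xi(T)^*$. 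One is $P_{\mathcal H}u_\xi$ with $u_\xi=\vartheta_\xi(U|_{\mathcal G_0})x_0\oplus\vartheta_\xi(\psi_1+h_0)\oplus(f_0/\overline{\vartheta_\xi})$, which lies in $\mathcal G\oplus\mathcal K$ by the choice of $u$ and $\vartheta$. The other is $P_{\mathcal H}(0\oplus\vartheta_\xi\psi_0\oplus0)$. Their difference is in the kernel.
\item Suppose this difference vanished for every $\xi\in\mathbb T$. Pair it with $(U\oplus N)^k(0\oplus1\oplus g_0)$. The $L^2(\mathbb T,m)$-part $\Phi_k(\xi)$ of the pairing is analytic by Lemma~\ref{lemphitt}. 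It equals $-\overline{G_k(\xi)}$ on $\mathbb T$, where $G_k$ is the analytic function of Lemma~\ref{lemthetadd} coming from the $\mathcal K$-part. Both are continuous on $\mathbb D^-$, so both are constant.
\item Comparing values at $0$, and using $|\vartheta(0)|<1$ with the orthogonality relations, gives $h_0=0$.
\item Then, via the $N^*$-analogue of Lemma~\ref{lemsubspace}, $P_{H^2}(\vartheta_\xi(\psi_1-\psi_0))=0$ for all $\xi$. Lemma~\ref{lemmutually} gives $\psi_1=\psi_0$, and finally $0\oplus\psi_0\oplus0\in\mathcal H$, a contradiction.
\end{enumerate}
The identity ``analytic equals conjugate-analytic on $\mathbb T$'' is the engine. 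Your single pairing $G(\xi)$ comes with no identity on $\mathbb T$ to exploit.

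Finally, $\alpha$ must include the cross term $\sum_n|f_n||g_n|\,\mathrm{d}\mu$, so that Lemma~\ref{lemthetadd} applies to $F(z)=z^k\sum_n\overline{f_n(z)}g_n(z)$. Building $\alpha$ from one vector and pairing with an arbitrary $y$ does not give $\int_{\mathbb D}u(|z|)|F|\,\mathrm{d}\mu<\infty$. Boundary continuity, which you flag as the obstacle, is routine: it is dominated convergence, as in Lemma~\ref{lemthetadd}.
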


\begin{proof} Denote by $N$ the stable summand of the minimal normal extension of $T$, denote by $\mathcal K$
 and $\mathcal G$ the spaces on which $N$ and $U$ act, respectively,
and by $\mathcal H$ the space on which $T$ acts. Then  $\mathcal H\in\operatorname{Lat}(U\oplus N)$, 
$T=(U\oplus N)|_{\mathcal H}$ and \eqref{minimal} is fulfilled. 
 
By Lemma \ref{asymptote}, 
\begin{equation*} (P_{\mathcal G}|_{\mathcal H}, U|_{\operatorname{clos}P_{\mathcal G}\mathcal H})
 \end{equation*}
is an isometric asymptote of $T$.
If $U|_{\operatorname{clos}P_{\mathcal G}\mathcal H}$ is not a unitary operator, 
then  $\sigma_{\mathrm{p}}(T^*)=\mathbb D$ and 
$\operatorname{clos}\theta(T)\mathcal H\neq\mathcal H$ for every inner function $\theta$, see Sec. 2.1.
Thus, we may assume that $U|_{\operatorname{clos}P_{\mathcal G}\mathcal H}$ is a unitary operator.  
Then  Lemma \ref{lemminimal} implies that  $\operatorname{clos}P_{\mathcal G}\mathcal H=\mathcal G$. Consequently, 
$(P_{\mathcal G}|_{\mathcal H}, U)$ is an isometric asymptote of $T$.

By Theorem IX.3.5 from \cite{sznfbk} or by \cite{k07}, there exists  $\mathcal H_1\in\operatorname{Lat}T$  
such that $T|_{\mathcal H_1}\approx S$.  By Lemma \ref{asymptote}, $P_{\mathcal G}{\mathcal H_1}$ is closed 
and $U|_{P_{\mathcal G}{\mathcal H_1}}\cong S$. Consequently,
\begin{equation*} U|_{\vee_{k=0}^\infty U^{-k}P_{\mathcal G}{\mathcal H_1}}\cong U_{\mathbb T}.
\end{equation*}
Using an appropriate unitary transformation, we may assume that 
\begin{equation*}
\vee_{k=0}^\infty U^{-k}P_{\mathcal G}{\mathcal H_1}=L^2(\mathbb T,m),\ \  U|_{L^2(\mathbb T,m)}=U_{\mathbb T},
\end{equation*}
 and
\begin{equation}\label{ppgghh1} P_{\mathcal G}{\mathcal H_1}=H^2. \end{equation}
Thus, \begin{equation}\label{gg}\mathcal G=\mathcal G_0\oplus L^2(\mathbb T,m),
\end{equation}
 where $\mathcal G_0$ is a reducing subspace for $U$, and 
\begin{equation*}\mathcal H\subset\mathcal G_0\oplus L^2(\mathbb T,m)\oplus\mathcal K.
\end{equation*}
Set $U_0=U|_{\mathcal G_0}$.

Set
\begin{equation}\label{nn1}\mathcal N=(\mathcal G_0\oplus L^2(\mathbb T,m)\oplus\mathcal K)\ominus\mathcal H 
\ \text{ and }\  
\mathcal N_1=(\mathcal G_0\oplus L^2(\mathbb T,m)\oplus\mathcal K)\ominus\mathcal H_1. 
 \end{equation}
Then
\begin{equation}\label{orthnn1} \mathcal G_0\oplus H^2_-\oplus\{0\}\subset\mathcal N_1 \ \text{ and }\  
P_{\mathcal H}(\{0\}\oplus H^2_-\oplus \{0\})\subset\mathcal N_1.
\end{equation}
Indeed, let $x\in(\mathcal G_0\oplus H^2_-\oplus\{0\})$, let $\psi\in H^2_-$, and let $y\in\mathcal H_1$. 
Then 
\begin{equation*} (x,y)=(x,P_{\mathcal G_0\oplus L^2(\mathbb T,m)}y)=(x,P_{\mathcal G}y)=0
\end{equation*}
 by \eqref{gg} and  \eqref{ppgghh1}. Furthermore, 
\begin{align*}
(P_{\mathcal H}(0\oplus \psi\oplus 0),y)&=((0\oplus \psi\oplus 0),P_{\mathcal H}y)=
((0\oplus \psi\oplus 0),y)\\&
=((0\oplus \psi\oplus 0),P_{\mathcal G}y)=0
\end{align*}
by \eqref{gg} and  \eqref{ppgghh1} again. The relation \eqref{orthnn1} is proved. 
Moreover, 
\begin{equation} \label{hh2} \mathcal N_1\cap(\{0\}\oplus H^2\oplus\{0\})=\{0\}.
\end{equation} 
Indeed, if $ h\in H^2$  is such that $0\oplus h\oplus 0\in\mathcal N_1$,  then  
\begin{equation*} (0\oplus h\oplus 0, y)=(0\oplus h\oplus 0, P_{\mathcal G}y)=0 
\end{equation*} 
for every $y\in\mathcal H_1$. The equalities \eqref{gg} and  \eqref{ppgghh1} imply 
$h\equiv 0$.

If $\{0\}\oplus H^2_-\oplus \{0\}\subset\mathcal H$, then  $\{0\}\oplus L^2(\mathbb T,m)\oplus \{0\}\subset\mathcal H$,
 because $\vee_{k=0}^\infty U_{\mathbb T}^kH^2_-=L^2(\mathbb T,m)$. Consequently, 
 there exists $\mathcal M\in \operatorname{Lat}T$ such that  $T|_{\mathcal M}\cong U_{\mathbb T}$.
Therefore, assume that  $\{0\}\oplus H^2_-\oplus \{0\}\not\subset\mathcal H$.

Take $\psi_0\in H^2_-$ such that $0\oplus\psi_0\oplus 0\not\in\mathcal H$. 
Set $y_0=P_{\mathcal H}(0\oplus\psi_0\oplus 0)$. Then there exist $x_0\in\mathcal G_0$, $\psi_1\in H^2_-$, 
$h_0\in H^2$ and  $f_0\in\mathcal K$ such that 
\begin{equation}\label{y0}y_0=x_0\oplus(\psi_1+h_0)\oplus f_0. 
\end{equation}

By \eqref{ppgghh1}, 
there exists $x\in\mathcal H_1$ such that $P_{\mathcal G}x=0\oplus 1\oplus 0\in\mathcal G_0\oplus L^2(\mathbb T,m)\oplus\mathcal K$. 
By \eqref{gg},  there exists $g_0\in\mathcal K$ such that $x=0\oplus 1\oplus g_0\in\mathcal H_1$. 
 Since  $P_{\mathcal G}|_{\mathcal H_1}$ realizes the relation $T|_{\mathcal H_1}\approx S$, 
\begin{equation}\label{spanhh1} \mathcal H_1=\vee_{k=0}^\infty
(U_0\oplus U_{\mathbb T}\oplus N)^k(0\oplus 1\oplus g_0).
\end{equation}

We may assume that $N$ has the form as in Lemma \ref{lemsubspace}. Then $f_0$ and $g_0$ have the forms 
\begin{equation*}f_0=\oplus_{n=1}^\infty f_n\in\oplus_{n=1}^\infty L^2(\Delta_n,\mu)
\ \text{ and }\  g_0=\oplus_{n=1}^\infty g_n\in\oplus_{n=1}^\infty L^2(\Delta_n,\mu),
\end{equation*}
 where $f_n$, $g_n\in L^2(\Delta_n,\mu)$, $n\geq 1$. 
Set 
\begin{equation*}
\mathrm{d}\alpha(z)=\Big(\sum_{n=1}^\infty|f_n(z)|^2+\sum_{n=1}^\infty|f_n(z)||g_n(z)|\Big)\mathrm{d}\mu(z).
\end{equation*}
Then $\alpha(\mathbb D)<\infty$. 

Let $u$ be a function from Lemma \ref{lemalpha} applied to $\alpha$, and let $\vartheta$ be a singular inner function 
from  Lemma \ref{lemest} applied to $u$. 
For every $\xi\in\mathbb T$ set 
\begin{equation}\label{uvxi} u_\xi=\vartheta_\xi(U_0) x_0\oplus\vartheta_\xi(\psi_1+h_0)\oplus  
\oplus_{n=1}^\infty\frac{f_n}{\overline\vartheta_\xi}
\ \text{ and }\   v_\xi=P_{\mathcal H}(0\oplus\vartheta_\xi \psi_0\oplus 0). 
\end{equation}
Then 
\begin{equation}\label{ker} P_{\mathcal H}u_\xi-v_\xi\in\ker\vartheta_\xi(T)^* 
\ \text{  for every }\xi\in\mathbb T.
\end{equation} 
Indeed, 
$\vartheta_\xi(T)^*=P_{\mathcal H}\vartheta_\xi(U\oplus N)^*|_{\mathcal H}$. 
Furthermore, 
\begin{align*}\vartheta_\xi(U\oplus N)^*u_\xi&=\vartheta_\xi(U_0)^*\vartheta_\xi(U_0) x_0\oplus
\overline\vartheta_\xi\vartheta_\xi(\psi_1+h_0)\oplus  \oplus_{n=1}^\infty\overline\vartheta_\xi\frac{f_n}{\overline\vartheta_\xi}\\&=
x_0\oplus(\psi_1+h_0)\oplus f_0=y_0=P_{\mathcal H}\vartheta_\xi(U\oplus N)^*u_\xi, 
\end{align*}
because $y_0\in\mathcal H$.
On the other side, 
\begin{align*}\vartheta_\xi(T)^*P_{\mathcal H}u_\xi&=P_{\mathcal H}\vartheta_\xi(U\oplus N)^*P_{\mathcal H}u_\xi
=P_{\mathcal H}\vartheta_\xi(U\oplus N)^*(u_\xi-P_{\mathcal N}u_\xi)\\&=P_{\mathcal H}\vartheta_\xi(U\oplus N)^*u_\xi,
\end{align*}
because $P_{\mathcal H}\vartheta_\xi(U\oplus N)^*P_{\mathcal N}=0$ by \eqref{nn1}. 
Furthermore, 
\begin{align*}\vartheta_\xi(T)^*v_\xi&=P_{\mathcal H}\vartheta_\xi(U\oplus N)^*v_\xi=
P_{\mathcal H}\vartheta_\xi(U\oplus N)^*P_{\mathcal H}(0\oplus\vartheta_\xi \psi_0\oplus 0)
\\&=P_{\mathcal H}\vartheta_\xi(U\oplus N)^*((0\oplus\vartheta_\xi \psi_0\oplus 0)-
P_{\mathcal N}(0\oplus\vartheta_\xi \psi_0\oplus 0))\\&=
P_{\mathcal H}(0\oplus\overline\vartheta_\xi \vartheta_\xi \psi_0\oplus 0)=
P_{\mathcal H}(0\oplus \psi_0\oplus 0)=y_0,
\end{align*}because $P_{\mathcal H}\vartheta_\xi(U\oplus N)^*P_{\mathcal N}=0$ by \eqref{nn1}. 
The relation \eqref{ker} is proved. 

Assume that $P_{\mathcal H}u_\xi-v_\xi=0$ for every $\xi\in\mathbb T$.  Then  \eqref{uvxi} implies that 
\begin{equation}\label{mainxi}
\begin{aligned}
0&=P_{\mathcal H}u_\xi-v_\xi=P_{\mathcal H}u_\xi-P_{\mathcal H}(0\oplus\vartheta_\xi \psi_0\oplus 0)\\&=
P_{\mathcal H}\Bigl(\vartheta_\xi(U_0) x_0\oplus\vartheta_\xi (\psi_1-\psi_0+h_0)\oplus  
\oplus_{n=1}^\infty\frac{f_n}{\overline\vartheta_\xi}\Bigr)\ \text{ for every }\xi\in\mathbb T.
\end{aligned}
\end{equation}

Since $\mathcal H_1\subset\mathcal H$, it follows from \eqref{spanhh1} and \eqref{mainxi} that 
  \begin{equation}\label{main1}
\begin{aligned}
&\Bigl(\vartheta_\xi(U_0) x_0\oplus\vartheta_\xi (\psi_1-\psi_0+h_0)\oplus  
\oplus_{n=1}^\infty\frac{f_n}{\overline\vartheta_\xi}, (U_0\oplus U_{\mathbb T}\oplus N)^k(0\oplus 1\oplus g_0)\Bigr)=0
\\& \text{ for every } k\geq 0 \ \text{ and every } \xi\in\mathbb T.
\end{aligned}
\end{equation}

For every $k\geq 0$ set 
\begin{equation*} \Phi_k(\xi)=\int_{\mathbb T}\vartheta_\xi (\psi_1-\psi_0+h_0)\overline\chi^k\mathrm{d}m 
\ \text{ and }\ 
G_k(\xi)=
\sum_{n=1}^\infty\int_{\mathbb D}\frac{\overline f_n g_n}{\vartheta_\xi}\chi^k\mathrm{d}\mu,  
\ \ \xi\in\mathbb D^-,
\end{equation*}
where $\chi(z)=z$ for every $z\in\mathbb D^-$. 
Then the relations \eqref{main1} can be rewritten as 
 \begin{equation}\label{main2} \Phi_k(\xi)=-\overline{ G_k(\xi)}\ \text{ for every } k\geq 0 \ \text{ and every } \xi\in\mathbb T.
\end{equation}

By Lemmas \ref{lemphitt} and  \ref{lemthetadd}, the functions $\Phi_k$ and $G_k$ are analytic in $\mathbb D$ and 
continuous in $\mathbb D^-$. Consequently,  the relations \eqref{main2} imply
\begin{equation*} \Phi_k(\xi)= \Phi_k(0)=-\overline{ G_k(0)}=-\overline{ G_k(\xi)}\ \text{ for every } k\geq 0 
\ \text{ and every } \xi\in\mathbb D^- .
\end{equation*}
The last relations and  Lemmas \ref{lemphitt} and  \ref{lemthetadd} imply 
\begin{equation*} \vartheta(0)\int_{\mathbb T} (\psi_1-\psi_0+h_0)\overline\chi^k\mathrm{d}m 
=-\frac{1}{\overline{\vartheta(0)}} \sum_{n=1}^\infty\int_{\mathbb D}
f_n\overline g_n\overline\chi^k\mathrm{d}\mu\ \text{ for every } k\geq 0 .
\end{equation*}
Taking into account \eqref{spanhh1}, we conclude  
\begin{equation}\label{orth} 0\oplus |\vartheta(0)|^2(\psi_1-\psi_0+h_0)\oplus f_0\in\mathcal N_1,
\end{equation}
where $\mathcal N_1$ is defined in \eqref{nn1}.

It follows from the definition of $y_0$ and \eqref{orthnn1} that $y_0\in\mathcal N_1$. It follows from  \eqref{orthnn1} that 
$x_0\oplus\psi_0\oplus 0\in\mathcal N_1$. These relations and \eqref{y0} imply 
\begin{equation}\label{f0} 0\oplus (\psi_1-\psi_0+h_0)\oplus f_0\in\mathcal N_1.
\end{equation}
Relations \eqref{f0} and \eqref{orth} imply 
\begin{equation*} 0\oplus(1- |\vartheta(0)|^2)(\psi_1-\psi_0+h_0)\oplus 0\in\mathcal N_1.
\end{equation*} 
The last relation and \eqref{orthnn1} imply  $0\oplus h_0\oplus 0\in\mathcal N_1$. 
By \eqref{hh2},
\begin{equation}\label{h000} h_0\equiv 0.
\end{equation}
Relations \eqref{h000}, \eqref{f0} and \eqref{orthnn1} imply 
\begin{equation*} 0\oplus0\oplus f_0\in\mathcal N_1.
\end{equation*} 
The last equality and the natural analog of Lemma \ref{lemsubspace} applied to $N^*$ imply 
\begin{equation}\label{f0xi} 0\oplus 0\oplus\oplus_{n=1}^\infty\frac{f_n}{\overline\vartheta_\xi}
\in\mathcal N_1\ \text{ for every }\xi\in\mathbb T, 
\end{equation}
because $\mathcal N_1\in\operatorname{Lat}(U\oplus N)^*$. 
It follows from  \eqref{h000} and \eqref{mainxi} that 
\begin{equation*}\vartheta_\xi(U_0) x_0\oplus\vartheta_\xi (\psi_1-\psi_0)\oplus  
\oplus_{n=1}^\infty\frac{f_n}{\overline\vartheta_\xi}\in\mathcal N_1 \ \text{ for every }\xi\in\mathbb T.
\end{equation*}
The last relation, \eqref{f0xi} and \eqref{orthnn1} imply
\begin{equation*} 0\oplus  P_{H^2}(\vartheta_\xi(\psi_1-\psi_0))\oplus 0 \in\mathcal N_1 
\ \text{ for every }\xi\in\mathbb T.
\end{equation*}
By  \eqref{hh2}, 
\begin{equation}\label{mutualmain} P_{H^2}(\vartheta_\xi(\psi_1-\psi_0))\equiv 0  
\ \text{ for every }\xi\in\mathbb T.
\end{equation}
Set $\psi_k=\overline\chi\overline h_{k0}$, $k=0,1$, where $h_{10}$, $h_{00}\in H^2$,
 and $\chi(z)=z$, $z\in\mathbb T$. Then the relation \eqref{mutualmain} 
is rewritted as 
\begin{equation*} h_{10}-h_{00}\in \vartheta_\xi H^2\ \text{ for every }\xi\in\mathbb T.
\end{equation*}
Lemma \ref{lemmutually} implies $h_{10}-h_{00}\equiv 0$.  Consequently, $\psi_1=\psi_0$. 

The last equality, the definition of $y_0$ and the equalities \eqref{y0} and \eqref{h000} imply 
\begin{equation*}y_0=x_0\oplus\psi_0\oplus f_0=P_{\mathcal H}(0\oplus\psi_0\oplus 0). 
\end{equation*}
The last equality and \eqref{nn1} imply 
\begin{equation*}x_0\oplus 0\oplus f_0=-P_{\mathcal N}(0\oplus\psi_0\oplus 0). 
\end{equation*}
Therefore, $(0\oplus\psi_0\oplus 0,P_{\mathcal N}(0\oplus\psi_0\oplus 0))=0$. This means 
$0\oplus\psi_0\oplus 0\in \mathcal H$, a contradiction with the choice of $\psi_0$. 

Thus, assuming that $P_{\mathcal H}u_\xi-v_\xi=0$ for every $\xi\in\mathbb T$, a contradiction is obtained. 
Consequently, there exists $\xi\in\mathbb T$ such that $P_{\mathcal H}u_\xi-v_\xi\neq 0$. 
By \eqref{ker}, $\ker\vartheta_\xi(T)^* \neq\{0\}$. Consequently, 
$\operatorname{clos}\vartheta_\xi(T)\mathcal H\neq\mathcal H$ 
and $\theta=\vartheta_\xi$ satisfies the conclusion of the theorem.
\end{proof}

\begin{remark} 
Let $\vartheta$ be a singular inner function from the proof of Theorem \ref{thm11}. Then the set
$\{\xi\in\mathbb T\colon \ker\vartheta_\xi(T)^* \neq\{0\}\}$ 
contains a non-empty open subarc of $\mathbb T$. Indeed, set 
 \begin{equation*}\Xi_k=\{\xi\in\mathbb T\colon\Phi_k(\xi)=-\overline{ G_k(\xi)}\}, \ k\geq 0, 
\ \text{ and }\ \Xi=\cap_{k=0}^\infty \Xi_k.
\end{equation*}
By Lemmas \ref{lemphitt} and  \ref{lemthetadd}, $\Phi_k$ and $G_k$ are continuous on $\mathbb T$. 
Consequently, $\Xi_k$ is  closed  for every $k$. Therefore, $\Xi$ is closed. 
It follows from the proof of the theorem that $\Xi\neq \mathbb T$. 
By \eqref{ker}--\eqref{main2}, 
\begin{equation*}\mathbb T\setminus\Xi\subset\{\xi\in\mathbb T\colon \ker\vartheta_\xi(T)^* \neq\{0\}\}
\end{equation*}
\end{remark}

\begin{theorem}\label{thm12} Suppose that $T$ is a non-stable (that is, not of class $C_{0\cdot}$) subnormal contraction, such that 
the spectral measure of a unitary summand of the minimal normal extension of $T$ is absolutely continuous 
(with respect of $m$). 
Then  there exists $\{0\}\neq\mathcal M_0\in \operatorname{Lat}T$ such that  $T|_{\mathcal M_0}$ is unitary  or there exists 
a singular inner function $\theta$ such that the range of $\theta(T)$ is not dense.
\end{theorem}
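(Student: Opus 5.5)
Our plan is to reduce Theorem \ref{thm12} to Theorem \ref{thm11} by a decomposition of the absolutely continuous unitary summand, using Lemma \ref{lemtau} to handle the case where Theorem \ref{thm11} yields an invariant subspace rather than a non-dense range. Write the minimal normal extension of $T$ as $U\oplus N$ on $\mathcal G\oplus\mathcal K$, with $U$ absolutely continuous and $N$ stable; $\mathcal G\ne\{0\}$ because $T$ is non-stable (Introduction). As in the proof of Theorem \ref{thm11}, Lemma \ref{asymptote} gives the isometric asymptote $(P_{\mathcal G}|_{\mathcal H},U|_{\operatorname{clos}P_{\mathcal G}\mathcal H})$. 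If this isometry is not unitary, Sec. 2.1 (Lemma 1.3 of \cite{g19}) already gives that $\theta(T)$ has non-dense range for every inner $\theta$, in particular for a singular inner one. So we may assume it is unitary, and then Lemma \ref{lemminimal} forces $\operatorname{clos}P_{\mathcal G}\mathcal H=\mathcal G$.

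Since $U$ is absolutely continuous, $U\cong\oplus_j U_{\tau_j}$ with Borel sets $\tau_1\supset\tau_2\supset\dots$ in $\mathbb T$, where $\tau_1$ carries the scalar spectral measure. We split into two cases.

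\emph{Case 1: $m(\tau_1)=1$.} Then $m$ is a scalar spectral measure for $U$ and Theorem \ref{thm11} applies directly. Either $\theta(T)$ has non-dense range for some singular inner $\theta$, and we are done, or there is $\mathcal M\in\operatorname{Lat}T$ with $T|_{\mathcal M}\cong U_{\mathbb T}$; in the latter case we take $\mathcal M_0=\mathcal M$, which is nonzero with $T|_{\mathcal M_0}$ unitary.

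\emph{Case 2: $m(\tau_1)<1$.} We enlarge $T$ artificially. Set $\tau=\mathbb T\setminus\tau_1$, so $m(\tau)>0$, and let
\[
R=U_\tau\oplus T\quad\text{on } L^2(\tau,m)\oplus\mathcal H .
\]
Then $R$ is a subnormal contraction, non-stable, with minimal normal extension $(U_\tau\oplus U)\oplus N$, and $m$ is a scalar spectral measure of $U_\tau\oplus U$. Theorem \ref{thm11} applied to $R$ gives one of two outcomes.

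In the first outcome there is $\mathcal M\in\operatorname{Lat}R$ with $R|_{\mathcal M}\cong U_{\mathbb T}$. Since $m(\tau)<1$ (if $m(\tau)=1$ then $m(\tau_1)=0$ and $U=0$, impossible), Lemma \ref{lemtau} yields a nonzero reducing subspace $\mathcal M_0$ of $T$ with $T|_{\mathcal M_0}$ unitary.

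In the second outcome $\theta(R)=\theta(U_\tau)\oplus\theta(T)$ has non-dense range for some singular inner $\theta$. Multiplication by $\theta$ on $L^2(\tau,m)$ is unitary because $|\theta|=1$ a.e., so its range is all of $L^2(\tau,m)$. Hence the non-density must come from $\theta(T)$, and the range of $\theta(T)$ is not dense.

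The main point to check is that $R$ genuinely satisfies the hypotheses of Theorem \ref{thm11}: that $(U_\tau\oplus U)\oplus N$ is the minimal normal extension of $R$, which follows directly from the definition, and that the $H^\infty$ calculus splits over the direct sum. The case where the unitary part of $R$ is restricted to a proper subspace of $\mathcal G$ needs no separate treatment, since Theorem \ref{thm11} handles it internally. Finally, the non-stability of $R$ is inherited from $T$.
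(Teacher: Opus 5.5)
Your proposal is correct and follows the paper's proof essentially verbatim. In the case $m(\tau_1)<1$ you pass to $R=U_{\mathbb T\setminus\tau_1}\oplus T$, apply Theorem~\ref{thm11} to $R$, and conclude with Lemma~\ref{lemtau} or with the unitarity of $\theta(U_{\mathbb T\setminus\tau_1})$. Your preliminary isometric-asymptote reduction via Lemmas~\ref{asymptote} and~\ref{lemminimal} is never used afterwards and can be dropped, since Theorem~\ref{thm11} already handles that case internally.
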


\begin{proof} By assumption, there exists  a Borel set $\tau\subset\mathbb T$  such that  $m(\tau)>0$ 
and a scalar-valued spectral measure of a unitary summand of the minimal normal extension of $T$ is $m|_{\tau}$. 
If  $m(\tau)=1$, then $T$ satisfies the assumption of Theorem \ref{thm11}. Thus, assume that $m(\tau)<1$.  
Set 
\begin{equation*}R=U_{\mathbb T\setminus\tau}\oplus T.
\end{equation*}
Then $R$ satisfies the assumption of Theorem \ref{thm11}. 
Consequently,  there exists $\mathcal M\in \operatorname{Lat}R$ such that  $R|_{\mathcal M}\cong U_{\mathbb T}$ or there exists 
a singular inner function $\theta$ such that the range of $\theta(R)$ is not dense.

If there exists $\mathcal M\in \operatorname{Lat}R$ such that  $R|_{\mathcal M}\cong U_{\mathbb T}$, 
then 
there exists $\{0\}\neq\mathcal M_0\in\operatorname{Lat}T$ such that  $T|_{\mathcal M_0}$ is unitary by Lemma \ref{lemtau} 
(applied to $\mathbb T\setminus\tau$; the inequality $m(\mathbb T\setminus\tau)<1$ follows from the assumption $m(\tau)>0$). 
If there exists 
a singular inner function $\theta$ such that the range of $\theta(R)$ is not dense, then the range of $\theta(T)$ is not dense, because  $\theta(R)=\theta(U_{\mathbb T\setminus\tau})\oplus \theta(T)$ and the range of 
$\theta(U_{\mathbb T\setminus\tau})$ is dense.
\end{proof}

\begin{remark} Let $T$ be a non-stable (not of class $C_{0\cdot}$) contraction, and let $(X, U)$ be a unitary asymptote of $T$. 
It is well known and easy to see that if

\noindent (a) $\mathcal H_{T,0}\neq\{0\}$, where $\mathcal H_{T,0}$ is the stable subspace of $T$ defined in \eqref{hhtt0},

\noindent or

\noindent (b) $\sigma_{\mathrm{p}}(T^*)\neq\emptyset$,

\noindent or

\noindent (c) $X^{-1}\mathcal M$ is a nontrivial subspace for some $\mathcal M\in\operatorname{Hlat}U$,

\noindent then $T$ has a nontrivial hyperinvariant subspace. 
(With respect to (c), see Theorem 30 in \cite{k16} or Theorem 1.4 in \cite{g19}; actually, (a) is a particular case of (c).)

The author does not know whether exist subnormal contractions 
which satisfy the assumptions of Theorem \ref{thm12} and do not satisfy at  least one of assumptions (a)--(c). 
\end{remark}

\section{Stable subnormal contractions}

The purpose of this section is to show that Theorem \ref{thm12} cannot be generalized to stable subnormal contractions. 
The results of this section
 are known and simple. 

\begin{lemma}\label{lem41} Let $T$ be a contraction such that  $\sigma(R)\subset\mathbb D$. Then $R$ is of class $C_{00}$, 
and $\sigma(\varphi(R))=\varphi(\sigma(R))$ for every $\varphi\in H^\infty$.
\end{lemma}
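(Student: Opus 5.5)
Note first that the statement has a typo: it names the contraction $T$ but then speaks of $R$. I read it as: $R$ is a contraction with $\sigma(R)\subset\mathbb D$. The plan has two parts. First show that $R$ is of class $C_{00}$ using the spectral radius. Then obtain the spectral mapping property from the Riesz--Dunford functional calculus, after checking that it agrees with the Sz.-Nagy--Foias $H^\infty$ calculus.

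\emph{Step 1 (class $C_{00}$).} Since $\sigma(R)$ is compact and lies in $\mathbb D$, we have $r(R)=r(R^*)<1$. Gelfand's formula then gives constants $C>0$ and $q<1$ with $\|R^n\|\le Cq^n$. Hence $R^n\to0$ and $R^{*n}\to0$ in norm, so $R$ is of class $C_{00}$.

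\emph{Step 2 (the two calculi agree).} Choose $r<1$ with $\sigma(R)\subset r\mathbb D$. Every $\varphi\in H^\infty$ is analytic on a neighbourhood of $\sigma(R)$, so the Riesz--Dunford operator
\[
\frac1{2\pi i}\int_{|\lambda|=r'}\varphi(\lambda)(\lambda-R)^{-1}\,\mathrm d\lambda, \qquad r<r'<1,
\]
is defined. Expanding $(\lambda-R)^{-1}=\sum_{n\ge0}R^n\lambda^{-n-1}$, which converges in norm on $|\lambda|=r'$ once $r'$ exceeds $r(R)$, shows that this operator equals $\sum_{n\ge0}\widehat\varphi(n)R^n$. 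This series converges absolutely in norm, because $|\widehat\varphi(n)|\le\|\varphi\|_\infty$ and $\|R^n\|\le Cq^n$.

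The Sz.-Nagy--Foias calculus defines $\varphi(R)$ as the strong limit of $\varphi_\rho(R)=\sum_n\widehat\varphi(n)\rho^nR^n$ as $\rho\to1$. By dominated convergence in the norm-convergent series, this limit equals $\sum_n\widehat\varphi(n)R^n$. Thus the two calculi coincide. Alternatively, one can note that the $H^\infty$ calculus is already well defined here, since a $C_{00}$ contraction has no singular unitary part, and it agrees with the polynomial calculus in the limit.

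\emph{Step 3 (spectral mapping).} The Riesz--Dunford spectral mapping theorem, $\sigma(\varphi(R))=\varphi(\sigma(R))$ for functions holomorphic near $\sigma(R)$, now gives the claim directly.

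The only step needing care is the identification of the two functional calculi in Step 2; it is routine given the geometric decay of $\|R^n\|$.
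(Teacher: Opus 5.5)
Your proposal is correct and follows the paper's proof. The bound $r(R)<1$ gives geometric decay of $\|R^n\|=\|R^{*n}\|$, so $R$ is of class $C_{00}$, and the spectral mapping property then follows from identifying the $H^\infty$ calculus with the Riesz--Dunford calculus. You also verify that identification explicitly, via the norm-convergent series $\sum_n\widehat\varphi(n)R^n$ and the limit $\varphi_\rho(R)\to\varphi(R)$, which the paper only asserts. You are right to read the $T$ in the hypothesis as a typo for $R$.
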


 \begin{proof} Let $r(R)<r<1$. Then $\|R^{*n}\|=\|R^n\|<r^n$ for sufficiently large $n$. Consequently, $R$ is of class $C_{00}$. 
Furthermore, the definition of $\varphi(R)$ given by $H^\infty$-functional calculus for  contractions without singular unitary summand coincides with 
the definition of $\varphi(R)$ given by the  Riesz--Dunford functional calculus, namely, 
\begin{equation*} \varphi(R)=\frac{1}{2\pi\mathrm{i}}\int_{|z|=r}\varphi(z)(z-R)^{-1}\mathrm{d}z.
\end{equation*}
The equality $\sigma(\varphi(R))=\varphi(\sigma(R))$ follows from the properties of the  Riesz--Dunford functional calculus. 
\end{proof}

\begin{proposition} \label{prop42} Let $T\in\mathcal L(\mathcal H)$ be a contraction, and let 
$\{\mathcal  M_n\}_{n=1}^\infty\subset\operatorname{Lat}T$ be such that 
\begin{equation*}\vee_{n=1}^\infty\mathcal  M_n=\mathcal H \ \text{ and } \ \sigma(T|_{\mathcal  M_n})\subset\mathbb D \ \text{ for every }n\geq 1.
\end{equation*}
Then $T$ is of class $C_{0\cdot}$. Furthermore, let $\varphi\in H^\infty$ be such that $\varphi(z)\neq 0$ for every $z\in\mathbb D$. 
Then $\operatorname{clos}\varphi(T)\mathcal H=\mathcal H$.
\end{proposition}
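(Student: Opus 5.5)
Both assertions reduce to the pieces $T|_{\mathcal M_n}$, each handled by Lemma \ref{lem41}, followed by a density argument.

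\emph{Stability.} Set $R_n=T|_{\mathcal M_n}$. Since $\sigma(R_n)\subset\mathbb D$, Lemma \ref{lem41} gives that $R_n$ is of class $C_{00}$. Hence $\mathcal M_n\subset\mathcal H_{T,0}$ for every $n$. The stable subspace $\mathcal H_{T,0}$ from \eqref{hhtt0} is closed (it is hyperinvariant, Theorem II.5.4 in \cite{sznfbk}; closedness is the standard fact that $\{x:\|T^nx\|\to0\}$ is a closed linear manifold when $\|T\|\le 1$). Therefore $\mathcal H=\vee_n\mathcal M_n\subset\mathcal H_{T,0}$, and $T$ is of class $C_{0\cdot}$.

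\emph{Density of the range.} Since $T$ is of class $C_{0\cdot}$, it has no unitary summand, so $\varphi(T)$ is defined by the $H^\infty$-functional calculus. Restriction to an invariant subspace commutes with this calculus, since $\varphi(T)$ is a strong limit of polynomials in $T$ (for instance the Abel means $\varphi(rT)$, $r\to1$, or Fejér means). Hence $\varphi(T)|_{\mathcal M_n}=\varphi(R_n)$. By Lemma \ref{lem41}, $\sigma(\varphi(R_n))=\varphi(\sigma(R_n))$, and this set does not contain $0$ because $\varphi$ has no zeros in $\mathbb D$ and $\sigma(R_n)\subset\mathbb D$. Thus $\varphi(R_n)$ is invertible on $\mathcal M_n$, so $\varphi(T)\mathcal M_n=\mathcal M_n$. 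Consequently $\varphi(T)\mathcal H\supset\bigcup_n\mathcal M_n$, whose closed linear span is $\mathcal H$; since the range is a linear manifold, its closure is $\mathcal H$.

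The only step needing care is the compatibility $\varphi(T)|_{\mathcal M_n}=\varphi(T|_{\mathcal M_n})$ between the Sz.-Nagy–Foias calculus for $T$ and for its restrictions. I would justify it by the strong-limit description of the calculus: if $p_k(T)\to\varphi(T)$ strongly for a suitable sequence of polynomials $p_k$, then the same $p_k$ satisfy $p_k(R_n)\to\varphi(R_n)$ strongly, and $p_k(T)|_{\mathcal M_n}=p_k(R_n)$. This is also exactly the identification used in Lemma \ref{lem41} to match the calculus with Riesz--Dunford.
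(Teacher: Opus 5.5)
Your proof is correct and follows the paper's argument: apply Lemma \ref{lem41} to each $T|_{\mathcal M_n}$ to get class $C_{00}$ (so $\mathcal M_n\subset\mathcal H_{T,0}$, hence $T$ is of class $C_{0\cdot}$) and invertibility of $\varphi(T|_{\mathcal M_n})$, so that $\varphi(T)\mathcal M_n=\mathcal M_n$ and the range of $\varphi(T)$ is dense. The extra justifications you give are sound, and they are exactly the points the paper leaves implicit: closedness of $\mathcal H_{T,0}$, and the identity $\varphi(T)|_{\mathcal M_n}=\varphi(T|_{\mathcal M_n})$ obtained from the strong limit $\varphi(rT)\to\varphi(T)$.
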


\begin{proof} By Lemma \ref{lem41}, $T|_{\mathcal  M_n}$ is of class $C_{00}$. 
 Consequently,  $T$ is of class $C_{0\cdot}$. Furthermore,     $\varphi(T|_{\mathcal  M_n})$ is invertible 
for every $n\geq 1$ by Lemma \ref{lem41} again. In particular, 
$\varphi(T)\mathcal  M_n =\mathcal  M_n$ for  every $n\geq 1$. Consequently, 
\begin{equation*}\operatorname{clos}\varphi(T)\mathcal H= \vee_{n=1}^\infty\varphi(T)\mathcal  M_n=
\vee_{n=1}^\infty\mathcal  M_n=\mathcal H. \qedhere
\end{equation*}
\end{proof}

\begin{remark} If $T$ satisfies the assumption of Proposition \ref{prop42}, then $T$ can be of class $C_{01}$. 
A simplest example is $T=S^*$, where $S$ is the unilateral shift, that is, the operator of multiplication by the independent variable acting on $H^2$. Indeed, let $\{\lambda_n\}_{n=1}^\infty\subset\mathbb D$ be such that 
$\lambda_n\neq\lambda_k$, if $n\neq k$,  and
$\sum_{n=1}^\infty(1-|\lambda_n|)=\infty$. Set 
\begin{equation*}h_n(z)=\frac{1}{1-\overline{\lambda_n}z}, \ z\in\mathbb D, \ \text{ and }
 \ \mathcal  M_n=\mathbb C h_n, \ n\geq 1. 
\end{equation*} Then  $\{\mathcal  M_n\}_{n=1}^\infty$ satisfy the  assumption of Proposition \ref{prop42}. 
\end{remark}

\begin{proposition}\label{prop45} Let $\mu$ be a positive finite Borel measure on $\mathbb D$.  
 Set  $\chi(z)=z$, $z\in\mathbb D$, and  
\begin{equation*} \overline{L^2_a(\mathbb D,\mu)}=\{f\in L^2(\mathbb D,\mu)\colon \overline f \ \text{ is analytic in } \mathbb D\}. 
\end{equation*}
Suppose that $\overline{L^2_a(\mathbb D,\mu)}$ is closed. 
Set  
\begin{equation*} \mathcal H=L^2(\mathbb D,\mu)\ominus\overline{L^2_a(\mathbb D,\mu)}.
\end{equation*}
Let $T$ be the operator of multiplication by $\chi$ acting on $\mathcal H$, and let $\lambda\in\mathbb D$. 
If $\overline\lambda\in\sigma_{\mathrm{p}}(T^*)$, then $\frac{1}{\chi-\lambda}\in L^2(\mathbb D,\mu)$.  
\end{proposition}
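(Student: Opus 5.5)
Write $\mathcal H^\perp=\overline{L^2_a(\mathbb D,\mu)}$, which is a closed subspace by assumption. The plan is to turn an eigenvector of $T^*$ into an explicit function, and then show that this function must be a nonzero multiple of $\frac{1}{\chi-\lambda}$ off a $\mu$-null set.

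First, the setup. $\mathcal H^\perp$ is invariant under multiplication by $\overline\chi$, so $\mathcal H$ is invariant under multiplication by $\chi$, and $T$ is well defined and subnormal. Its adjoint is $T^*=P_{\mathcal H}\overline\chi|_{\mathcal H}$. Now take $0\neq x\in\mathcal H$ with $T^*x=\overline\lambda x$. Then
\begin{equation*}
(\overline\chi-\overline\lambda)x\in\mathcal H^\perp=\overline{L^2_a(\mathbb D,\mu)},
\end{equation*}
so there is a function $g$, analytic in $\mathbb D$, with $(\overline\chi-\overline\lambda)x=\overline g$ $\mu$-a.e. Conjugating gives $(\chi-\lambda)\overline x=g$ $\mu$-a.e.

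Next, the decomposition. Since $g$ is analytic, write $g(z)=g(\lambda)+(z-\lambda)q(z)$ with $q$ analytic in $\mathbb D$. This gives
\begin{equation*}
\overline x=\frac{g(\lambda)}{\chi-\lambda}+q \quad \mu\text{-a.e. on }\mathbb D\setminus\{\lambda\}.
\end{equation*}
The point $\lambda$ needs separate care. If $\mu(\{\lambda\})>0$, then $\frac{1}{\chi-\lambda}$ is not defined there. In that case I would interpret the conclusion appropriately, or observe that it forces $g(\lambda)=0$. I expect this to be a minor point.

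The main step is to show that $q\in\overline{L^2_a}$ after conjugation, i.e.\ that $q\in L^2(\mu)$, and that $g(\lambda)\neq0$. I would try the following. Suppose first that $\frac{g(\lambda)}{\chi-\lambda}$ failed to be in $L^2$. On the compact disc $\{|z-\lambda|\le\delta\}\subset\mathbb D$, $q$ is bounded, so $q\in L^2(\mu|_{\{|z-\lambda|\le\delta\}})$. Off that disc, $\frac{1}{\chi-\lambda}$ is bounded, so $q=\overline x-\frac{g(\lambda)}{\chi-\lambda}\in L^2$ there as well. Hence $q\in L^2(\mu)$, with $q$ analytic, and therefore $\overline q\in\overline{L^2_a}=\mathcal H^\perp$. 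It follows that
\begin{equation*}
g(\lambda)\frac{1}{\chi-\lambda}=\overline x-q\in L^2(\mu),
\end{equation*}
so the integrability of $\frac{1}{\chi-\lambda}$ reduces entirely to showing $g(\lambda)\neq0$.

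Finally, $g(\lambda)\neq0$. If $g(\lambda)=0$, then $x=\overline q$ $\mu$-a.e., so $x\in\mathcal H^\perp$. But also $x\in\mathcal H$, hence $x=0$, contradicting the choice of $x$. Therefore $g(\lambda)\neq0$, and $\frac{1}{\chi-\lambda}\in L^2(\mathbb D,\mu)$.

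I expect the main obstacle to be bookkeeping about $\mu$-a.e. identities near $\lambda$, namely the atom at $\lambda$ and the local boundedness of $q$. The step that actually gives the result is the observation that $q\in L^2$ splits the eigenvector into a component in $\mathcal H^\perp$ plus a multiple of $\overline{1/(\chi-\lambda)}$.
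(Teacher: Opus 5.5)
Your argument is the paper's proof. Your $q$ is the paper's $g_\lambda$. You write $g=g(\lambda)+(\chi-\lambda)g_\lambda$, check $g_\lambda\in L^2(\mathbb D,\mu)$ by local boundedness near $\lambda$, and rule out $g(\lambda)=0$ because it would put the eigenvector in $\mathcal H\cap\mathcal H^\perp$. The clause ``suppose first that $\frac{g(\lambda)}{\chi-\lambda}$ failed to be in $L^2$'' is never used; $q\in L^2(\mathbb D,\mu)$ holds unconditionally.

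The one thing you should not dismiss as minor is an atom at $\lambda$. If $\mu(\{\lambda\})>0$, then indeed $g(\lambda)=0$. But your identity $\overline x=q$ holds only off $\{\lambda\}$, so the step ``$x=\overline q$ $\mu$-a.e., hence $x\in\mathcal H^\perp$'' fails. No reinterpretation of the conclusion rescues the statement. Take $\mu=\delta_\lambda+A$ with $A$ area measure on $\mathbb D$:
\begin{itemize}
\item $\overline{L^2_a(\mathbb D,\mu)}$ is closed, because point evaluation at $\lambda$ is continuous on the Bergman space.
\item $e=1_{\{\lambda\}}\notin\overline{L^2_a(\mathbb D,\mu)}$, since an analytic function vanishing $A$-a.e.\ vanishes identically. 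Hence $x=P_{\mathcal H}e\neq 0$.
\item Since $(\overline\chi-\overline\lambda)e=0$, we get $(\overline\chi-\overline\lambda)x=-(\overline\chi-\overline\lambda)P_{\mathcal H^\perp}e\in\mathcal H^\perp$, that is, $T^*x=\overline\lambda x$.
\item Yet $\int_{\mathbb D}|\chi-\lambda|^{-2}\,\mathrm{d}A=\infty$, so $\frac{1}{\chi-\lambda}\notin L^2(\mathbb D,\mu)$ however it is defined at $\lambda$.
\end{itemize}

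So the proposition needs the extra hypothesis $\mu(\{\lambda\})=0$. The paper's proof uses this tacitly as well: its step ``if $g(\lambda)=0$, then $f\in\overline{L^2_a(\mathbb D,\mu)}$'' fails in exactly the same way. The omission is harmless for Theorem~\ref{thm44}, where $\mu$ has no atoms. With that hypothesis added, your proof is complete.
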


\begin{proof} 
Let $g$ be a function analytic in $\mathbb D$. Set $g_\lambda(z)=\frac{g(z)-g(\lambda)}{z-\lambda}$, if $z\neq\lambda$, 
$z\in\mathbb D$, and $g_\lambda(\lambda)=g'(\lambda)$. If $g\in L^2(\mathbb D,\mu)$, then $g_\lambda\in L^2(\mathbb D,\mu)$.  

Let $f\in \mathcal H$, let $f\not\equiv 0$, and let $T^*f=\overline\lambda f$. 
Then $(\overline\chi-\overline\lambda)f\in\overline{L^2_a(\mathbb D,\mu)}$. Consequently, there exists $g\in L^2(\mathbb D,\mu)$ 
such that $g$ is analytic in $\mathbb D$ and $(\overline\chi-\overline\lambda)f=\overline g$. 
Thus, $f=\overline{g_\lambda}+\frac{\overline{g(\lambda)}}{\overline\chi-\overline\lambda}$. 
 If $g(\lambda)=0$, then $f\in\overline{ L^2_a(\mathbb D,\mu)}$. 
This implies  $f\equiv 0$, a contradiction. Thus,   $g(\lambda)\neq 0$. 
Consequently,  $\frac{1}{\chi-\lambda}\in L^2(\mathbb D,\mu)$. 
\end{proof}

Recall that $m$ is the normalized Lebesgue measure on the unit circle $\mathbb T$. 

\begin{theorem} \label{thm44} Let $\nu$ be a positive finite Borel measure on $(0,1)$ such that  $1\in\operatorname{supp}\nu$. 
 Set 
$\mathrm{d}\mu(r\zeta)=\mathrm{d}m(\zeta)\mathrm{d}\nu(r)$, where $r\in(0,1)$ and $\zeta\in\mathbb T$, 
 and set $\chi(z)=z$, $z\in\mathbb D$. 
Set 
\begin{equation*} \overline{P^2(\mathbb D,\mu)}=\vee_{n=0}^\infty\overline{\chi^n}
 \ \text{ and } \ \mathcal H=L^2(\mathbb D,\mu)\ominus\overline{P^2(\mathbb D,\mu)}.
\end{equation*}
Then  
 \begin{equation*} \overline{P^2(\mathbb D,\mu)}=\overline{L^2_a(\mathbb D,\mu)}, 
\end{equation*}
where $\overline{L^2_a(\mathbb D,\mu)}$ is defined in Proposition \ref{prop45}.
Let $T$ be the operator of multiplication by $\chi$ acting on $\mathcal H$.
Then $T$ is a pure subnormal contraction, $T$ satisfies the assumption of Proposition \ref{prop42}, $\mathbb T\subset\sigma(T)$,  and $T\cong \xi T$ for every $\xi\in\mathbb T$. If, in addition, 
\begin{equation}\label{infty}\int_{(0,1)}\frac{\mathrm{d}\nu(r)}{|r^2-a^2|}=\infty \ \text{ for every } a\in[0,1),  
\end{equation}
then $\sigma_{\mathrm{p}}(T^*)=\emptyset$.
\end{theorem}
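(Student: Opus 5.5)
The rotation invariance drives the whole argument: $\mu$ is invariant under $z\mapsto\xi z$, $\xi\in\mathbb T$. The plan is to decompose $L^2(\mathbb D,\mu)$ into Fourier modes in the angular variable, verify the claimed structure of $\mathcal H$ directly, and then check each assertion separately.

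\emph{Fourier decomposition.} Every $f\in L^2(\mathbb D,\mu)$ can be written as $f(r\zeta)=\sum_{k\in\mathbb Z}a_k(r)\zeta^k$ with $\|f\|^2=\sum_k\int|a_k|^2\mathrm d\nu$. The closed span of $\overline{\chi^n}$, $n\ge0$, is then $\{\sum_{n\ge0}c_n\bar z^n\}$ with $\sum|c_n|^2\int r^{2n}\mathrm d\nu<\infty$. A function $f$ with $\bar f$ analytic has the Taylor expansion $\bar f=\sum c_nz^n$ on each circle, and its $L^2(\mu)$ norm squared is $\sum|c_n|^2\int r^{2n}\mathrm d\nu$, which is finite. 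Truncations of the series converge in norm. Hence $\overline{L^2_a}=\overline{P^2}$, and in particular this space is closed.

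\emph{Structure of $\mathcal H$.} Consequently $\mathcal H=\bigoplus_k \mathcal H_k$, where $\mathcal H_k=\{a(r)\zeta^k\}$ for $k\ge1$. For $k\le0$, $\mathcal H_k=\{a\zeta^k:\ a\perp r^{|k|}\text{ in }L^2(\nu)\}$. The space $\mathcal H$ is invariant under multiplication by $\chi$, because $\overline{P^2}$ is invariant under multiplication by $\bar\chi$. So $T$ is subnormal with normal extension $N_\mu$, and $\|T\|\le1$.

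\emph{Purity.} A reducing subspace on which $T$ is normal would carry a normal restriction $M$ of $N_\mu$ whose spectral measure is absolutely continuous with respect to $\mu$, so its spectral subspace $\mathcal L$ would be reducing for $N_\mu$. A nonzero reducing subspace of $N_\mu$ is $L^2(\mathcal L\text{-set }E)$ with $\mu(E)>0$. Such a space contains $\bar z^n1_E$-type functions, so it cannot lie inside $\mathcal H$: the functions $\bar z^n 1_E$ have nonzero projection onto $\overline{P^2}$ for a suitable $n$, because $\int \bar z^n 1_E\, z^n\,\mathrm d\mu=\mu(E\text{-weighted }r^{2n})>0$. This case needs some care.

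\emph{Rotation equivalence.} The unitary $(W_\xi f)(z)=f(\bar\xi z)$ preserves $\mu$ and $\overline{P^2}$, hence preserves $\mathcal H$. It satisfies $W_\xi T W_\xi^{-1}=\xi T$, so $T\cong\xi T$.

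\emph{Hypothesis of Proposition \ref{prop42}.} Take $\mathcal M_\rho=\mathcal H\cap L^2(\{|z|\le\rho\},\mu)$ for $\rho<1$. If $a$ is supported in $(0,\rho]$, then $T$ restricted to the $\chi$-invariant subspace $\mathcal M_\rho$ is a restriction of a normal operator with spectrum in $\{|z|\le\rho\}$, so $\sigma(T|_{\mathcal M_\rho})\subset\{|z|\le\rho\}$ by subnormality (the spectral radius is at most $\rho$, and restrictions of operators with $\|N^n\|\le\rho^n$ have the same bound). It remains to check that $\bigvee_\rho\mathcal M_\rho=\mathcal H$. Components with $k\ge1$ are clearly approximable. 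For $k\le0$, a function $a\perp r^{|k|}$ must be approximated by truncations adjusted to stay orthogonal to $r^{|k|}$. Truncate to $a1_{(0,\rho]}$ and subtract $c\,r^{|k|}1_{(0,\rho]}$ where the normalizing $\int_{(0,\rho]} r^{2|k|}\mathrm d\nu$ is bounded below; then $c\to0$.

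\emph{Spectrum contains $\mathbb T$.} Since $1\in\operatorname{supp}\nu$, the space $\mathcal H_k$ for $k\ge1$ is $L^2(\nu)\zeta^k$, and $T$ maps $\mathcal H_k$ to $\mathcal H_{k+1}$ by multiplying $a$ by $r$. Taking $a$ concentrated near $r=1$ gives $\|T^nx\|\approx\|x\|$ on many steps. A Weyl-type argument then gives approximate eigenvectors $\sum_{j}\lambda^{-j}\chi^j a\,\zeta^{k}$ for $|\lambda|=1$, so $\mathbb T\subset\sigma(T)$. Alternatively, $\|T^n\|=1$ for all $n$ gives $r(T)=1$, and rotation invariance then puts the whole circle $\mathbb T$ into $\sigma(T)$.

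\emph{Point spectrum of $T^*$.} By Proposition \ref{prop45}, $\bar\lambda\in\sigma_{\mathrm p}(T^*)$ would force $\int|z-\lambda|^{-2}\mathrm d\mu<\infty$. Averaging over $\zeta$ gives $\int_{\mathbb T}|r\zeta-\lambda|^{-2}\mathrm dm=1/|r^2-|\lambda|^2|$, so this integral equals the quantity in \eqref{infty} with $a=|\lambda|$, which is infinite. Hence $\sigma_{\mathrm p}(T^*)=\emptyset$.

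\emph{Main obstacle.} The delicate steps are purity and the density $\bigvee\mathcal M_\rho=\mathcal H$. For purity, the cleanest approach is to observe that a normal part would have to reduce $N_\mu$ and therefore contain functions of the form $g(r)\zeta^{-k}$ for all large $k$ with the same radial profile, which contradicts the orthogonality constraints for $k\le0$.
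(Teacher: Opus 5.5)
Your proposal is correct in substance and shares the paper's skeleton: the normal extension $N_\mu$, the rotation unitaries $W_\xi$, Proposition~\ref{prop45} combined with the angular identity \eqref{equal}, and the subspaces of $\mathcal H$ carried by $\{|z|\le\rho\}$ for Proposition~\ref{prop42}. Several sub-steps, however, go differently.

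\begin{itemize}
\item \emph{Purity.} The paper cites Feldman's Proposition 3.4. In your Fourier picture it becomes a one-line orthogonality check.
\item \emph{$\mathbb T\subset\sigma(T)$.} The paper uses the minimality of $N_\mu$ and the inclusion $\sigma(N_\mu)\subset\sigma(T)$ (Theorem II.2.11 in Conway's book). You use $\|T^n\|=1$, via the test vectors $1_{(1-\varepsilon,1)}(r)\zeta^k$ with $k\ge 1$, which are nonzero because $1\in\operatorname{supp}\nu$. Together with rotation invariance of $\sigma(T)$ this suffices, so you never need minimality of $N_\mu$.
\item \emph{Density for Proposition~\ref{prop42}.} The paper proves $\bigvee_n\mathcal H_{r_n}=\mathcal H$ through the spanning families \eqref{4hh}, \eqref{4hhr} and dominated convergence. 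You work mode by mode: for $k\le 0$ you correct $a1_{(0,\rho]}$ by $c_\rho r^{|k|}1_{(0,\rho]}$ with $c_\rho\to 0$. This avoids having to establish the spanning identities.
\end{itemize}
The paper's version is shorter on the page because it outsources purity and the spectral inclusion to the literature.

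Three points should be tightened.

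First, for $\overline{P^2(\mathbb D,\mu)}\subset\overline{L^2_a(\mathbb D,\mu)}$ it is not enough to describe the closed span as the series $\sum_n c_n\overline{z}^n$ with $\sum_n|c_n|^2\int r^{2n}\,\mathrm{d}\nu<\infty$. You must check that such a series converges on all of $\mathbb D$; its $L^2$-limit then agrees $\mu$-a.e. with the pointwise sum. This is exactly where $1\in\operatorname{supp}\nu$ is needed for the first claim. Since $\int r^{2n}\,\mathrm{d}\nu\ge\rho^{2n}\nu((\rho,1))>0$ for every $\rho<1$, you get $\limsup_n|c_n|^{1/n}\le 1$; the paper records this as $\lim\|\chi^n\|^{1/n}=1$. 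Without the hypothesis the equality fails. For instance, if $\nu$ lived on $(0,1/2]$, then $\sum_n(3/2)^n\overline{\chi^n}$ would lie in $\overline{P^2(\mathbb D,\mu)}$ but would not agree $\mu$-a.e. with any function whose conjugate is analytic in $\mathbb D$.

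Second, your purity paragraph is complete once you say that $\mathcal M$ itself reduces $N_\mu$: an invariant subspace of a normal operator on which the restriction is normal is reducing. Then $\mathcal M=L^2(E,\mu)$, and $1_E\in\mathcal M\subset\mathcal H$ contradicts $(1_E,1)=\mu(E)>0$. Passing to some larger ``spectral subspace'', as you phrase it, would lose the inclusion in $\mathcal H$. The alternative argument in your last paragraph is wrong as stated and should be dropped. For a sector $E=\{r\zeta\colon \zeta\in A\}$ with $m(A)<1$, the space $L^2(E,\mu)$ contains no nonzero function of the form $g(r)\zeta^{-k}$.

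Third, Proposition~\ref{prop45} only excludes eigenvalues $\overline\lambda$ with $\lambda\in\mathbb D$. For $|\lambda|=1$, the relation $T^*f=\overline\lambda f$ forces $Tf=\lambda f$ because $T$ is a contraction. Hence $(\chi-\lambda)f=0$ $\mu$-a.e., and $f=0$ since $\chi-\lambda$ has no zeros on $\mathbb D$. The paper leaves this case implicit as well.
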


\begin{proof}  Since $1\in\operatorname{supp}\nu$, we have  $\mathbb T\subset\operatorname{supp}\mu$   
and $\lim\|\chi^n\|^{\frac{1}{n}}=1$. Furthermore, 
\begin{equation}\label{equal}\int_{\mathbb D}\frac{\mathrm{d}\mu}{|\chi-\lambda|^2} = 
\int_{(0,1)}\frac{\mathrm{d}\nu(r)}{|r^2-|\lambda|^2|} 
\end{equation}
for every $\lambda\in\mathbb D$.

Denote by $N_\mu$ the operator of multiplication by $\chi$ acting on $L^2(\mathbb D,\mu)$. 
Then  $\overline{P^2(\mathbb D,\mu)}\in\operatorname{Lat}N_\mu^*$. Consequently, 
$\mathcal H\in\operatorname{Lat}N_\mu$. Thus, $T$ is subnormal. Furthermore, $T$ is pure by Proposition 3.4 in \cite{feldman}. 

It is easy to see that $N_\mu$ is the minimal normal extension of $T$. By Theorem II.2.11(a) in \cite{conwaysubnormal}, 
 $\sigma(N_\mu)\subset\sigma(T)$.
Consequently,  $\mathbb T\subset\sigma(T)$. 
  
 Since $\lim\|\chi^n\|^{\frac{1}{n}}=1$, and    $\{\chi_n\}_{n=1}^\infty$ is an orthogonal family,  we have 
 \begin{align*} P^2(\mathbb D,\mu)&=\vee_{n=0}^\infty\chi^n=
\{f \colon f \text{ is analytic in } \mathbb D \text{ and } 
\sum_{n=0}^\infty|\widehat f(n)|^2\|\chi^n\|^2<\infty\}\\&=
\{f \colon f \text{ is analytic in } \mathbb D \text{ and }   f\in L^2(\mathbb D,\mu)\}. 
\end{align*}
Thus, $\overline{P^2(\mathbb D,\mu)}=\overline{L^2_a(\mathbb D,\mu)}$. 
If \eqref{infty} is fulfilled, then  
 $\sigma_{\mathrm{p}}(T^*)=\emptyset$ by \eqref{equal} and  Proposition \ref{prop45}.  

Let $\xi\in\mathbb T$. Define $W_\xi\in\mathcal L(L^2(\mathbb D,\mu))$ by the formula $(W_\xi f)(z)=f(z\overline\xi)$, 
$z\in\mathbb D$.  
Then $W_\xi$ is a unitary trasformation, $W_\xi \xi N_\mu= N_\mu W_\xi$, and 
$W_\xi\overline{\chi^n}=\xi^n\overline{\chi^n}$ for every $n\geq 0$. Therefore, 
\begin{equation*} W_\xi \overline{P^2(\mathbb D,\mu)}= \overline{P^2(\mathbb D,\mu)}.
\end{equation*}
Consequently, $W_\xi\mathcal H=\mathcal H$. Thus, $W_\xi$ realizes the relation $T\cong\xi T$. 

Set \begin{equation*} 
 e_n=\frac{\overline{\chi^n}}{\|\chi^n\|}, \ \ n\geq 0. 
\end{equation*}
Then   $\{e_n\}_{n=1}^\infty$ is an orthonormal basis of $ \overline{P^2(\mathbb D,\mu)}$. 

We have
 \begin{equation*} L^2(\mathbb D,\mu)= 
\bigvee_{n\geq 0\atop k\geq 1}|\chi|^{2n}\chi^k\vee \bigvee_{n\geq 1\atop k\geq 0}|\chi|^{2n}\overline{\chi^k}
\vee \overline{P^2(\mathbb D,\mu)}.
\end{equation*}
It is easy to see that
\begin{equation*} \bigvee_{n\geq 0 \atop k\geq 1}|\chi|^{2n}\chi^k\subset\mathcal H.  
\end{equation*}
Furthermore, for every $n\geq 1$, $k\geq 0$ we have 
\begin{align*} P_{\mathcal H}|\chi|^{2n} \overline{\chi^k}&=|\chi|^{2n} \overline{\chi^k}
-P_{\overline{P^2(\mathbb D,\mu)}}|\chi|^{2n}\overline{\chi^k}=
 |\chi|^{2n} \overline{\chi^k}-(|\chi|^{2n} \overline{\chi^k}, e_k)e_k\\&
=\Bigl(|\chi|^{2n}-\frac{\|\chi^{n+k}\|^2}{\|\chi^k\|^2}\Bigr)\overline{\chi^k}. 
\end{align*}
Thus, 
\begin{equation}\label{4hh} \mathcal H=\bigvee_{n\geq 0\atop k\geq 1}|\chi|^{2n}\chi^k\vee 
\bigvee_{n\geq 1\atop k\geq 0}\Bigl(|\chi|^{2n}-\frac{\|\chi^{n+k}\|^2}{\|\chi^k\|^2}\Bigr)\overline{\chi^k}.
\end{equation}

For every $0<r<1$ set $D_r=\{|z|< r\}$ and $D_r^-=\{|z|\leq r\}$. As usual, set  
  \begin{equation*} L^2(D_r,\mu)=\{f\in L^2(\mathbb D,\mu)\colon f(z)=0 \ \text{ for } z\in \mathbb D\setminus D_r\}. 
\end{equation*}
Let   $\chi_r(z)=z$ for $z\in D_r$ and $\chi_r(z)=0$ for $z\in \mathbb D\setminus D_r$. 
 Set 
 \begin{equation*}\overline{P^2(D_r,\mu)}=\vee_{n=0}^\infty\overline{\chi_r^n}
 \ \text{ and } \ \mathcal H_r=L^2(D_r,\mu)\ominus\overline{P^2(D_r,\mu)}.
\end{equation*}
Since $(f,\overline{\chi^n})=(f,\overline{\chi_r^n})$ for every $f\in L^2(D_r,\mu)$, 
we conclude $\mathcal H_r\subset \mathcal H$. It is easy to see that 
  $\mathcal H_r\in\operatorname{Lat}T$ and $\sigma(T|_{\mathcal H_r})\subset D_r^-$. 

As for $\mathcal H$, we have  
\begin{equation}\label{4hhr} \mathcal H_r=\bigvee_{n\geq 0\atop k\geq 1}|\chi_r|^{2n}\chi_r^k\vee 
\bigvee_{n\geq 1\atop k\geq 0}\Bigl(|\chi_r|^{2n}-\frac{\|\chi_r^{n+k}\|^2}{\|\chi_r^k\|^2}\Big)\overline{\chi_r^k}.
\end{equation}

Denote by $f_r$ some function from the right side of \eqref{4hhr} and by $f$ the corresponding function 
 from the right side of \eqref{4hh}. It follows from the Lebesgue convergence theorem that $f_r\to f$ when $r\to 1$. 
Let $\{r_n\}_{n=1}^\infty\subset (0,1)$  be such that $r_n\to 1$. Set $\mathcal M_n=\mathcal H_{r_n}$. 
The equalities \eqref{4hhr} and  \eqref{4hh} imply $\vee_{n=1}^\infty\mathcal  M_n=\mathcal H $. 
Consequently, $T$ satisfies the assumption of Proposition \ref{prop42}.
\end{proof}

\begin{corollary} \label{cor44} Let $T\in\mathcal L(\mathcal H)$ be as in Theorem \ref{thm44}, and let \eqref{infty}
be fulfilled. Then $\operatorname{clos}\varphi(T)\mathcal H=\mathcal H$ for every
 $\varphi\in H^\infty$ such that $\varphi\not\equiv 0$. 
\end{corollary}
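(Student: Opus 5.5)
The plan is to factor a general nonzero $\varphi\in H^\infty$ and treat the inner and outer-type parts separately. Write $\varphi=B\psi$, where $B$ is the Blaschke product formed from the zeros of $\varphi$ in $\mathbb D$ (possibly $B\equiv1$), and $\psi\in H^\infty$ has no zeros in $\mathbb D$. Then $\varphi(T)=B(T)\psi(T)$, since the $H^\infty$-functional calculus is multiplicative for contractions without singular unitary summand, and $T$ is of class $C_{0\cdot}$ by Theorem \ref{thm44}.

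By Theorem \ref{thm44}, $T$ satisfies the assumption of Proposition \ref{prop42}. Since $\psi(z)\neq0$ for every $z\in\mathbb D$, Proposition \ref{prop42} gives $\operatorname{clos}\psi(T)\mathcal H=\mathcal H$. It therefore suffices to show that $\operatorname{clos}B(T)\mathcal H=\mathcal H$. Indeed, from $\operatorname{clos}\psi(T)\mathcal H=\mathcal H$ and the boundedness of $B(T)$ we get $\operatorname{clos}B(T)\psi(T)\mathcal H\supset B(T)\mathcal H$, so the density of the range of $B(T)$ gives the conclusion.

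For the Blaschke factor the plan is to show $\ker B(T)^*=\{0\}$. I see two routes.

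The first route is to apply Proposition \ref{prop42} directly to $B$ on each $\mathcal M_n$. Here $B(T|_{\mathcal M_n})$ is not invertible when $B$ vanishes on $\sigma(T|_{\mathcal M_n})$, so instead I would use the point spectrum. Since \eqref{infty} gives $\sigma_{\mathrm p}(T^*)=\emptyset$, every operator $T-\lambda$, $\lambda\in\mathbb D$, has dense range. Each finite Blaschke product $b$ satisfies $b(T)=c\prod_j(T-\lambda_j)(I-\overline{\lambda_j}T)^{-1}$, a product of dense-range operators and invertible operators, so it has dense range.

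For an infinite Blaschke product, the statement that $B(T)$ has dense range reduces to an argument on $\ker B(T)^*$. One option is to write $B=b_N B_N$, where $b_N$ is the finite Blaschke product of the first $N$ zeros and $B_N$ is the tail. Alternatively, one can note that $\ker B(T)^*$ is invariant under $T^*$, and that $T^*$ restricted to it is annihilated by $\overline B$-type calculus. Then $T^*|_{\ker B(T)^*}$ is a $C_0$-type contraction whose minimal function divides $B$. A nonzero $C_0$ contraction with inner function dividing a Blaschke product has point spectrum: the zeros of its minimal function lie in $\sigma_{\mathrm p}$. So $\sigma_{\mathrm p}(T^*)\ne\emptyset$, a contradiction.

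Concretely, let $\mathcal E=\ker B(T)^*$ and $A=(T^*|_{\mathcal E})^*$, the compression of $T$ to $\mathcal E$. Then $B(A)=0$, so $A$ is of class $C_0$ with minimal function $m_A$ dividing $B$, hence a Blaschke product. Some zero $\lambda$ of $m_A$ yields $\ker(A-\lambda)^*\neq\{0\}$. This is the standard fact that $\sigma(A)\cap\mathbb D$ is the zero set of $m_A$, combined with $m_A=b_\lambda m'$ and $m'(A)\ne0$, so that a vector in the range of $m'(A)^*$ lies in $\ker(A-\lambda)^*$. That kernel sits inside $\ker(T-\lambda)^*$, because $\mathcal E$ is $T^*$-invariant. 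This contradicts $\sigma_{\mathrm p}(T^*)=\emptyset$.

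The main obstacle is this last step: justifying the passage from $B(T)^*$ having nontrivial kernel to an eigenvector of $T^*$. That requires the $C_0$ machinery from \cite{sznfbk}, namely the minimal function and the fact that for the $C_0$ contraction $A$ its spectrum in $\mathbb D$ coincides with the zero set of $m_A$, with the zeros being eigenvalues of $A^*$. I will cite these rather than reprove them, and then combine the pieces as described above.
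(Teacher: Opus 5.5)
Your proof is correct and takes the same route as the paper: Proposition \ref{prop42} handles the zero-free factor, and $\sigma_{\mathrm p}(T^*)=\emptyset$ handles the zeros of $\varphi$. The one difference is that the paper covers the zeros by citing Lemma 1.2 of \cite{g19}, whereas you prove density of the range of $B(T)$ directly from the minimal function of the $C_0$ contraction $A$ with $A^*=T^*|_{\ker B(T)^*}$; this is a valid self-contained substitute, since $A$ is of class $C_{0\cdot}$ as a compression of $T$ and hence admits the $H^\infty$ calculus.
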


\begin{proof} Let $\varphi\in H^\infty$ be  such that $\varphi\not\equiv 0$. If $\varphi(z)\neq 0$ for every
 $z\in\mathbb D$, then $\operatorname{clos}\varphi(T)\mathcal H=\mathcal H$ by  Proposition \ref{prop42}. 
If there exists  $z\in\mathbb D$ such that  $\varphi(z)=0$, then  $\operatorname{clos}\varphi(T)\mathcal H=\mathcal H$ by Lemma 1.2 in \cite{g19}, because $\sigma_{\mathrm{p}}(T^*)=\emptyset$. 
\end{proof}

\begin{remark} Recall that a subnormal operator $T$ has Dunford's property, that is, the local spectral subspaces 
 for $T$ corresponding to closed subsets of $\mathbb C$ are closed. The definition of  local spectral subspace 
is not recalled here, see \cite{laursenneumann}. 
 Let $T$ be as in Theorem \ref{thm44}. Then there exists $r\in(0,1)$ such that 
$\{0\}\neq\mathcal H_r\neq\mathcal H$, where $\mathcal H_r$ is from \eqref{4hhr}. 
Since $\sigma(T)\setminus\sigma(T|_{\mathcal H_r})\neq\emptyset$, we have $T$ has a nontrivial 
hyperinvariant subspace.  Namely, this is the local spectral subspace for $T$ corresponding to 
$\sigma(T|_{\mathcal H_r})$. For details of the proof see, for example, Proposition 3.6 in \cite{jungko}. 
\end{remark}

\begin{remark} Let in assumption of Theorem \ref{thm44} $\mathrm{d}\nu(r)=2r\mathrm{d}r$. 
Then $\mu$ is the normalized area measure on $\mathbb D$ and $T$ is decomposable by \cite{afv}.  
The definition of decomposable operator is not recalled here, see \cite{laursenneumann}. 
For other examples of decomposable (non-normal) subnormal operators see \cite{albr} and \cite{radj}. 
\end{remark}

\end{document}